\documentclass{amsart}
\usepackage{amssymb}
\usepackage{mathrsfs}
\usepackage{graphicx}
\usepackage{comment}
\usepackage{enumitem}

\newcommand{\be}{\begin{equation}}
\newcommand{\ee}{\end{equation}}
\newcommand{\ba}{\begin{align}}
\newcommand{\ea}{\end{align}}

\newtheorem{theorem}{Theorem}[section]
\newtheorem{lemma}[theorem]{Lemma}

\newtheorem*{proposition*}{Proposition}
\newtheorem{corollary}[theorem]{Corollary}
\newtheorem*{theorem*}{Theorem}
\newtheorem*{corollary*}{Corollary}
\newtheorem*{cor*}{Corollary}	
	
\theoremstyle{definition}

\newtheorem{remark}[theorem]{Remark}

\newtheorem{conjecture}[theorem]{Conjecture}

{\begin{list}{}{%
\settowidth{\labelwidth}{\textsf{{\it #1.}}}%
\setlength{\labelsep}{4mm}%
\setlength{\leftmargin}{\labelwidth}%
\addtolength{\leftmargin}{\labelsep}%
}}%
{\end{list}}

\def\beq{\begin{equation}}\def\enq{\end{equation}}

{\begin{list}{}{%
\settowidth{\labelwidth}{\textsf{{\it #1.}}}%
\setlength{\labelsep}{2mm}%
\setlength{\leftmargin}{\labelwidth}%
\addtolength{\leftmargin}{\labelsep}%
\addtolength{\leftmargin}{4mm}%
\setlength{\itemsep}{6pt}%
\setlength{\listparindent}{0pt}%
\setlength{\topsep}{3pt}%
}}%

\usepackage{color}
\usepackage[normalem]{ulem} 
\usepackage{soul}

\usepackage{xcolor}


\allowdisplaybreaks

\begin{document}

\title[Extending Recent Congruence Results on $(\ell,\mu)$-regular overpartitions]{Extending Recent Congruence Results on $(\ell,\mu)$-regular overpartitions}

\author[B. Paudel]{Bishnu Paudel}

\author[J. Sellers]{James A. Sellers}

\author[H. Wang]{Haiyang Wang}

\address{Mathematics and Statistics Department\\
         University of Minnesota Duluth\\
         Duluth, MN 55812, USA}
\email{bpaudel@d.umn.edu, jsellers@d.umn.edu, wan02600@d.umn.edu}
\begin{abstract}
	Recently, Alanazi, Munagi, and Saikia employed the theory of modular forms to investigate the arithmetic properties of the function $\overline{R_{\ell,\mu}}(n)$, which enumerates the overpartitions of $n$ where no part is divisible by either $\ell$ or $\mu$, for various integer pairs $(\ell, \mu)$. In this paper, we substantially extend several of their results and establish infinitely many families of new congruences. Our proofs are entirely elementary, relying solely on classical $q$-series manipulations and dissection formulas.		
\end{abstract}			
		
\maketitle
\section{Introduction}
A \textit{partition} of a non-negative integer $n$ is a sequence of positive integers $\lambda_1 \geq \lambda_2 \geq \cdots \geq \lambda_r$ such that $\sum_{i=1}^r \lambda_i = n$. Each integer $\lambda_i$ in this sequence is referred to as a \textit{part} of the partition.

An \textit{overpartition} of $n$ extends the notion of ordinary partitions by allowing the first occurrence of each
part size to be overlined. To illustrate, the fourteen overpartitions of $4$ are:
\[(4), (\overline{4}), (3,1), (\overline{3},1), (3,\overline{1}), (\overline{3},\overline{1}), (2,2), (\overline{2},2),\]
\[(2,1,1), (\overline{2},1,1), (2,\overline{1},1), (\overline{2},\overline{1},1), (1,1,1,1), (\overline{1},1,1,1).\]
The number of overpartitions of $n$ is denoted by $\overline{p}(n)$, with $\overline{p}(0) := 1$. The study of overpartitions dates back to MacMahon~\cite{MacMahon}, and was later revisited by Corteel and Lovejoy~\cite{Corteel-Lovejoy}, sparking renewed interest in the subject. To describe the generating function for overpartitions, recall the $q$-Pochhammer symbol
\[(a; q)_{\infty} := \prod_{i=0}^{\infty} (1 - a q^i),\] 
and use the shorthand notation 
\[f_k := (q^k; q^k)_{\infty}.\] 
Corteel and Lovejoy~\cite{Corteel-Lovejoy} noted that the generating function for $\overline{p}(n)$ is given by
\begin{equation*}
	\sum_{n = 0}^{\infty} \overline{p}(n)q^{n} = \prod_{n = 1}^{\infty} \frac{1 + q^{n}}{1 - q^{n}} = \frac{f_{2}}{f_{1}^{2}}.
\end{equation*}
For the arithmetic properties of $\overline{p}(n)$, we refer the reader to~\cite{Chen-Sun-Wang-Zhang, Fortin-Jacob-Mathieu, Hirschhorn-Sellers_2005, Mahlburg, Treneer, Xia}.

An overpartition is called \textit{$\ell$-regular} if none of its parts is divisible by $\ell$. The number of $\ell$-regular overpartitions of $n$, denoted by $\overline{A}_{\ell}(n)$, has the generating function
\begin{equation*}
	\sum_{n = 0}^{\infty} \overline{A}_{\ell}(n)q^{n} = \frac{f_{\ell}^{2}f_{2}}{f_{1}^{2} f_{2\ell}}.
\end{equation*}
Lovejoy~\cite{Lovejoy} studied this family of functions in detail, 
and the arithmetic properties of $\overline{A}_{\ell}(n)$ have been explored by several authors; see~\cite{Alanazi-Munagi-Sellers, Barman-Ray, Ray-Barman, Shen}.

A natural extension arises by considering overpartitions that are simultaneously $\ell$-regular and $\mu$-regular for coprime integers $\ell, \mu > 1$. Such overpartitions, called \textit{$(\ell, \mu)$-regular}, contain no part divisible by $\ell$ or $\mu$. The number of $(\ell, \mu)$-regular overpartitions of $n$ is denoted by $\overline{R_{\ell,\mu}}(n)$ with generating function
\begin{equation*}
	\sum_{n=0}^{\infty} \overline{R_{\ell,\mu}}(n) q^n = \frac{f_2 f_{\ell}^2 f_{\mu}^2 f_{2 \mu \ell}}{f_1^2 f_{2 \ell} f_{2 \mu} f_{\mu \ell}^2}.
\end{equation*}

In their work~\cite{Nadji-Ahmia-Ramirez}, Nadji, Ahmia, and Ramírez investigated the arithmetic properties of $\overline{R_{\ell,\mu}}(n)$ for the pairs
\[
(\ell,\mu) \in \{(4,3), (4,9), (8,3), (8,9)\},
\] 
employing dissection formulas and Smoot's implementation of Radu's algorithm. Alanazi, Munagi, and Saikia~\cite{Alanazi-Munagi-Saikia} established numerous congruences for the following combinations:
\[
(\ell,\mu) \in \{(2,3), (2,5), (3,5), (4,3), (4,9), (8,27), (16,81)\}.
\]
Their approach utilized modular forms and Radu's algorithm. In the concluding remarks of~\cite{Alanazi-Munagi-Saikia}, the authors suggested that it is desirable to find elementary proofs for their congruences. Recently, Ghoshal and Jana~\cite{Ghoshal-Jana} provided such proofs for several of these congruences and also introduced additional divisibility relations.

This article contributes further to the topic by extending various congruences from~\cite{Alanazi-Munagi-Saikia} using elementary $q$-series identities. We also supply elementary proofs for several results from~\cite{Alanazi-Munagi-Saikia} that were not 
addressed in~\cite{Ghoshal-Jana} and \cite{Nadji-Ahmia-Ramirez}.
		
We first analyze the pair $(\ell,\mu)=(2,3)$ and deduce the following internal congruence modulo $3$.
\begin{theorem}\label{T23}
    For all $n\geq0$, we have 
    \begin{align*}
        \overline{R_{2,3}}(27n)&\equiv  \overline{R_{2,3}}(3n) \pmod{3}.
        \end{align*}
\end{theorem}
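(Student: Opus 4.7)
My plan is to reduce the generating function modulo $3$, then chase three successive $3$-dissections and compare the first and third.

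\textbf{Reduction modulo $3$.} Using the elementary congruence $(1-q^k)^3 \equiv 1-q^{3k} \pmod 3$ (equivalently, $f_k^3 \equiv f_{3k} \pmod 3$), apply this to $f_2^3$, to $f_6^3$, and to $1/f_1^2 = f_1/f_1^3$ to collapse
\[\sum_{n \geq 0} \overline{R_{2,3}}(n) q^n = \frac{f_2^3 f_3^2 f_{12}}{f_1^2 f_4 f_6^3} \equiv \frac{f_1 f_3 f_6 f_{12}}{f_4 f_{18}} \pmod 3.\]
This already gets rid of all higher powers and leaves a very compact eta-quotient to work with.

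\textbf{First $3$-dissection.} The factor $\frac{f_3 f_6 f_{12}}{f_{18}}$ is already a function of $q^3$, so extracting $\sum_n \overline{R_{2,3}}(3n) q^n$ modulo $3$ reduces to isolating the exponent-$0$-modulo-$3$ component of $f_1/f_4$. I would invoke a classical $3$-dissection identity, derivable from Euler's pentagonal number theorem or tabulated in Hirschhorn's \emph{The Power of $q$}, to write $f_1/f_4 \equiv P_0(q^3) + q P_1(q^3) + q^2 P_2(q^3) \pmod 3$. Retaining only the $P_0$-piece, substituting $q^3 \mapsto q$ and folding in $\frac{f_1 f_2 f_4}{f_6}$ (the image of $\frac{f_3 f_6 f_{12}}{f_{18}}$ under that substitution), I obtain an eta-quotient form of
\[B(q) := \sum_{n \geq 0} \overline{R_{2,3}}(3n)\, q^n \pmod 3.\]

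\textbf{Iteration and comparison.} I would then apply the same dissection procedure to $B(q)$ to extract $\sum_n \overline{R_{2,3}}(9n) q^n \pmod 3$, and once more to extract $\sum_n \overline{R_{2,3}}(27n) q^n \pmod 3$. Matching the last expression against $B(q)$ modulo $3$ closes the proof. The main obstacle is keeping the successive dissections tractable, since each iteration can in principle multiply the complexity of the eta quotient; the mod-$3$ simplification is what prevents blow-up. A particularly clean outcome would be to discover that $B(q)$ is invariant under the $U_9$ operator modulo $3$, in which case the second and third dissections merely reproduce $B(q)$ and the theorem falls out at once. Failing that, a direct side-by-side comparison of the two final eta quotients, possibly after one further $f_k^3 \equiv f_{3k}$ reduction, will suffice.
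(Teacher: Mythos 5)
Your reduction modulo $3$ and your first $3$-dissection are correct and match the paper's opening moves: after writing the generating function modulo $3$ as a function of $q^3$ times $f_1/f_4$, one indeed only needs the $q^{3n}$-component of the $3$-dissection of $f_1/f_4$ (the paper takes this dissection from the literature, namely $\frac{f_1}{f_4}=\frac{f_6f_9f_{18}}{f_{12}^3}-q\frac{f_3f_{18}^4}{f_9^2f_{12}^3}-q^2\frac{f_6^2f_9f_{36}^3}{f_{12}^4f_{18}^2}$, rather than from the pentagonal number theorem, but that is immaterial). Your resulting series $B(q)\equiv \frac{f_1 f_2^2 f_3}{f_4^2}\pmod{3}$ agrees, modulo $3$, with the paper's, which after one further use of $f_k^3\equiv f_{3k}$ is put in the form $\varphi(q)\,\frac{f_3^2}{f_6}$.

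The gap is in the second half. ``Apply the same dissection procedure to $B(q)$'' is not actionable: the only dissection tool you have introduced is the one for $f_1/f_4$, which does not apply to $\frac{f_1f_2^2f_3}{f_4^2}$, and you give no mechanism for verifying the hoped-for $U_9$-invariance of $B$ modulo $3$ --- which is precisely the content of the theorem, not a lucky bonus. The missing idea is the identification $B(q)\equiv\varphi(q)\frac{f_3^2}{f_6}\pmod{3}$ (it follows from $\frac{f_2^2}{f_4^2}=\varphi(q)\frac{f_1^2}{f_2^3}$ together with $f_k^3\equiv f_{3k}$). Once $B$ is in this form the $9$-dissection is essentially free: since no square is $\equiv 2\pmod 3$ and a square divisible by $3$ is divisible by $9$, one has $\varphi(q)=\varphi(q^9)+qH(q^3)$ for some series $H$; and the dissection $\frac{f_1^2}{f_2}=\frac{f_9^2}{f_{18}}-2q\frac{f_3f_{18}^2}{f_6f_9}$, applied with $q\mapsto q^3$, shows that the exponents appearing in $\frac{f_3^2}{f_6}$ lie only in the classes $0$ and $3$ modulo $9$. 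Multiplying the two factors, the only contribution to exponents $\equiv 0\pmod 9$ is $\varphi(q^9)\frac{f_{27}^2}{f_{54}}$, whose image under $q^9\mapsto q$ is again $\varphi(q)\frac{f_3^2}{f_6}\equiv B(q)$, and the theorem follows. Without this (or some equivalent) structural rewriting of $B$, your plan stalls at the second dissection, so as it stands the argument is incomplete.
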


In the following corollary, we establish infinitely many families of congruences modulo $6$.
\begin{corollary}\label{C23}
    For all $\beta\geq1$ and $n\geq0$, we have 
\begin{equation}\label{2306}
\overline{R_{2,3}}\left(3^\beta(3n+2)\right)\equiv0\pmod{6}.
    \end{equation}
\end{corollary}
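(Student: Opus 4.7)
The plan is to separate the divisibility by $6$ into its factors of $2$ and $3$. The $2$-adic part is a one-step generating-function reduction, while the $3$-adic part follows by iterating Theorem~\ref{T23} down to small base cases.

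For the mod $2$ part, I would simplify the generating function using $f_k^2 \equiv f_{2k}\pmod 2$:
\[
\sum_{n \geq 0} \overline{R_{2,3}}(n)\, q^n \;=\; \frac{f_2^3 f_3^2 f_{12}}{f_1^2 f_4 f_6^3} \;\equiv\; \frac{f_2^3 f_6 f_{12}}{f_2 f_4 f_6^3} \;=\; \frac{f_2^2 f_{12}}{f_4 f_6^2} \;\equiv\; 1 \pmod 2.
\]
Hence $\overline{R_{2,3}}(n)$ is even for every $n \geq 1$, which disposes of the factor of $2$ in \eqref{2306}, since $3^\beta(3n+2) \geq 6$ whenever $\beta \geq 1$.

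For the mod $3$ part, I would induct on $\beta$. When $\beta \geq 3$, we have $3^\beta(3n+2) = 27 \cdot 3^{\beta-3}(3n+2)$, so Theorem~\ref{T23} yields
\[
\overline{R_{2,3}}\bigl(3^\beta(3n+2)\bigr) \;\equiv\; \overline{R_{2,3}}\bigl(3^{\beta-2}(3n+2)\bigr) \pmod 3,
\]
which reduces the claim in steps of $2$ down to the two base cases
\[
\overline{R_{2,3}}(9n+6) \equiv 0 \pmod 3, \qquad \overline{R_{2,3}}(27n+18) \equiv 0 \pmod 3.
\]

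To dispatch these base cases I would reuse the $3$-dissection identities underpinning Theorem~\ref{T23}. Such a proof typically produces mod $3$ expressions for $\sum_n \overline{R_{2,3}}(3n) q^n$ and $\sum_n \overline{R_{2,3}}(9n) q^n$, and once those are available, isolating the sub-progressions $9n+6$ and $27n+18$ amounts to reading off specific residues of the exponent. The main obstacle I anticipate is the bookkeeping at this step: $\overline{R_{2,3}}(27n+18) \equiv 0 \pmod 3$ should fall out cleanly from the mod $3$ form of $\sum_n \overline{R_{2,3}}(9n)q^n$ already needed for Theorem~\ref{T23}, while $\overline{R_{2,3}}(9n+6) \equiv 0 \pmod 3$ may require one additional $3$-dissection of $\sum_n \overline{R_{2,3}}(3n)q^n$ beyond what the theorem explicitly uses.
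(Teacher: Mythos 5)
Your proposal matches the paper's proof: the factor of $2$ is handled by the same observation that the generating function is $\equiv 1\pmod 2$ (the paper's Lemma giving \eqref{Tlu02}), and the factor of $3$ by the same descent in steps of two via Theorem~\ref{T23}, with base cases $\overline{R_{2,3}}(9n+6)\equiv 0$ and $\overline{R_{2,3}}(27n+18)\equiv 0\pmod 3$. The base cases you leave as bookkeeping are read off exactly as you anticipate: the dissection \eqref{R233nA} of $\sum_{n}\overline{R_{2,3}}(3n)q^n$ modulo $3$, already produced in the proof of Theorem~\ref{T23}, visibly contains no exponents $\equiv 2\pmod 3$ nor $\equiv 6\pmod 9$, which settles both at once with no additional dissection needed.
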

The congruence $(10)$ given in \cite[Theorem 4.1]{Alanazi-Munagi-Saikia},
$$\overline{R_{2,3}}(9n+6)\equiv0\pmod{6},$$
is a special case of \eqref{2306} with $\beta=1$, and an elementary proof was given in \cite[Theorem 3.1]{Ghoshal-Jana}.

Next, we consider the pair $(\ell,\mu)=(4,3)$
and prove the following new congruences.
\begin{theorem}\label{T432n} For all $n\geq1$, we have
\begin{equation}
\label{432n04}\overline{R_{4,3}}(2n)\equiv0\pmod{4}.
\end{equation}
If $n\geq1$ is not an odd square, then we have
\begin{equation}
\label{432n08}\overline{R_{4,3}}(2n)\equiv0\pmod{8}.
\end{equation}
\end{theorem}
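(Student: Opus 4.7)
My plan is to derive an explicit formula for $\sum_{n\geq 0}\overline{R_{4,3}}(2n)q^n$ by 2-dissection and then reduce it modulo $4$ and modulo $8$.

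First I would split the generating function as
\[
\frac{f_2f_4^2f_3^2f_{24}}{f_1^2f_8f_6f_{12}^2}=\frac{f_2}{f_1^2}\cdot\frac{f_4^2}{f_8}\cdot\frac{f_3^2f_{24}}{f_6f_{12}^2},
\]
noting that the middle factor equals $\varphi(-q^4)$, a series in $q^4$ only. I would apply the standard 2-dissection $\frac{f_2}{f_1^2}=\frac{f_8^5}{f_2^4f_{16}^2}+2q\frac{f_4^2f_{16}^2}{f_2^4f_8}$ to the first factor, and after the substitution $u=q^3$ (which turns the third factor into $\varphi(-u)/\varphi(-u^4)$) use $\varphi(-u)=\varphi(u^4)-2u\psi(u^8)$ to 2-dissect it as well. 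Collecting the even-in-$q$ terms and sending $q^2\mapsto q$ should yield
\[
\sum_{n\geq 0}\overline{R_{4,3}}(2n)q^n=\frac{f_2^2f_4^4f_{12}^6}{f_1^4f_6^4f_8^2f_{24}^2}-\frac{4q^2f_2^4f_8^2f_{24}^2}{f_1^4f_4^2f_6^2}.
\]

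For \eqref{432n04}, the key reduction is $\varphi(-q^{2k})^2\equiv 1\pmod 4$, a reformulation of $\sum_{a^2+b^2=n}(-1)^{a+b}\equiv 0\pmod 4$ for $n\geq 1$ (four-fold rotational symmetry of lattice points on circles). This gives $\frac{f_{2k}}{f_k^2}\equiv\varphi(q^k)\pmod 4$, which collapses the first term modulo $4$ to $\varphi(q^6)^2$; the second term is obviously $\equiv 0\pmod 4$. Since $r_2(k)$, the number of representations of $k$ as a sum of two integer squares, satisfies $r_2(k)\equiv 0\pmod 4$ for every $k\geq 1$, the congruence \eqref{432n04} follows.

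For \eqref{432n08}, I would push the same ideas one power of $2$ further: $\frac{1}{\varphi(-q^{2k})^2}\equiv 2-\varphi(-q^{2k})^2\pmod 8$ and $\varphi(-q^{2k})^2\equiv\varphi(q^{2k})^2\pmod 8$, together with the identity $\varphi(q)^2=\varphi(q^2)^2+4q\varphi(q^4)\psi(q^8)$, which collapses modulo $8$ to $\varphi(q)^2\equiv\varphi(q^2)^2+4q\psi(q^8)\pmod 8$. Iterating this gives $\varphi(q)^2\varphi(q^4)^2\equiv 1+4q\psi(q^8)+4q^2\psi(q^{16})\pmod 8$, and running this through the dissection I expect to arrive at
\[
\sum_{n\geq 0}\overline{R_{4,3}}(2n)q^n\equiv 1+4q\psi(q^8)+4q^2\bigl[\psi(q^{16})+q^4\psi(q^{48})+f_4f_8f_{12}f_{24}\bigr]\pmod 8.
\]

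The main obstacle is to show the bracketed quantity vanishes modulo $2$. I would handle it using two classical mod-$2$ identities: $f_kf_{2k}\equiv\psi(q^k)\pmod 2$ (immediate from $f_kf_{2k}/\psi(q^k)=f_k^2/f_{2k}=\varphi(-q^k)\equiv 1\pmod 2$), which gives $f_4f_8f_{12}f_{24}\equiv\psi(q^4)\psi(q^{12})\pmod 2$; and the theta identity $\psi(q)\psi(q^3)=\psi(q^4)\varphi(q^6)+q\psi(q^{12})\varphi(q^2)$, which reduces modulo $2$ to $\psi(q)\psi(q^3)\equiv\psi(q^4)+q\psi(q^{12})\pmod 2$. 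Substituting $q\mapsto q^4$ gives $\psi(q^4)\psi(q^{12})\equiv\psi(q^{16})+q^4\psi(q^{48})\pmod 2$, so the bracket equals $2\psi(q^4)\psi(q^{12})\equiv 0\pmod 2$ and the entire correction is $\equiv 0\pmod 8$. Therefore $\sum_{n\geq 0}\overline{R_{4,3}}(2n)q^n\equiv 1+4q\psi(q^8)\pmod 8$; since $q\psi(q^8)=\sum_{k\geq 0}q^{(2k+1)^2}$, this coefficient is $4$ exactly at odd squares and $0$ elsewhere for $n\geq 1$, giving \eqref{432n08}.
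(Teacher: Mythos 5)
Your argument is correct, and I verified your 2-dissection numerically: the two-term expression you obtain for $\sum_{n\geq 0}\overline{R_{4,3}}(2n)q^n$ does agree with the generating function, and your mod $8$ reduction (via $\varphi(q)^2\equiv\varphi(q^2)^2+4q\psi(q^8)\pmod 8$ and the classical dissection $\psi(q)\psi(q^3)=\psi(q^4)\varphi(q^6)+q\psi(q^{12})\varphi(q^2)$) correctly collapses everything to $1+4q\psi(q^8)\pmod 8$. But your route is genuinely different from, and noticeably heavier than, the paper's. The paper dissects by pulling out $f_3^2/f_1^2$ via \eqref{f32/f12}; with that choice every factor supported on multiples of $3$ cancels, and the even part collapses to the single clean term $f_2^6/(f_1^4f_4^2)=\varphi(q)/\varphi(-q)$, after which \eqref{phiq^418} and \eqref{1/phi-q} give $\varphi(q)^2\varphi(q^2)^2\pmod 8$ and a two-line theta expansion finishes both congruences. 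Your dissection keeps the $f_6,f_{12},f_{24}$ factors alive, so you are forced to detect a nontrivial cancellation between $4q^2\psi(q^{16})+4q^6\psi(q^{48})$ and $4q^2f_4f_8f_{12}f_{24}$, which is exactly where the extra $\psi(q)\psi(q^3)$ identity is needed; in effect you are reproving, mod $8$, that your two-term expression equals $\varphi(q)/\varphi(-q)$ (it does, identically). You also miss the bonus the paper gets for free, namely the exact identity $\overline{R_{4,3}}(2n)=\overline{pp_o}(n)$ used in Remark \ref{remark:ppo}. One small imprecision: in your mod $4$ step the first term reduces to $\varphi(q)^2\varphi(q^6)^2$, not to $\varphi(q^6)^2$ alone; this is harmless since both squares are $\equiv 1\pmod 4$ by the same $r_2(k)\equiv 0\pmod 4$ observation you invoke, but the statement as written is off by a factor of $\varphi(q)^2$.
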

We note that Theorem \ref{T432n} implies the congruences $(16)$ and $(17)$ given in \cite[Theorem 3.1]{Nadji-Ahmia-Ramirez}:
\begin{align*}
    \overline{R_{4,3}}(4n+2)&\equiv0\pmod{4},\\
    \overline{R_{4,3}}(8n+4)&\equiv0\pmod{8}.
\end{align*}

Finally, we investigate the pair $(\ell,\mu)=(4,9)$. An elementary proof of $\overline{R_{4,9}}(4n)\equiv 0\pmod{3}$ was given in \cite[Theorem 4.1]{Nadji-Ahmia-Ramirez}; however, we extend this congruence to 
the modulus $12$ and provide a different elementary proof here.
\begin{theorem}\label{T494n012} For $n\geq1$, we have
    \begin{equation}
       \label{494n012} \overline{R_{4,9}}(4n)\equiv 0\pmod{12}.
    \end{equation}
\end{theorem}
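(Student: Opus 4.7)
The plan is to isolate the series $T(q) := \sum_{n \ge 0}\overline{R_{4,9}}(4n)\,q^n$ by two successive $2$-dissections of the generating function and then to show $T(q) \equiv 1 \pmod{12}$ by treating the moduli $4$ and $3$ separately. Starting from $\frac{f_2 f_4^2 f_9^2 f_{72}}{f_1^2 f_8 f_{18} f_{36}^2}$, the classical identity
\[
\frac{f_2}{f_1^2}=\frac{f_8^5}{f_2^4 f_{16}^2}+2q\,\frac{f_4^2 f_{16}^2}{f_2^4 f_8}
\]
used both as stated and with $q\mapsto q^9$ separates out $\sum \overline{R_{4,9}}(2n)q^n$; a further $2$-dissection, coming from squaring the companion identity for $\frac{1}{f_1^2}$, then produces the explicit three-term expression
\[
T(q)=\frac{f_2^4 f_4^8 f_{18}^6}{f_1^8 f_8^4 f_9^4 f_{36}^2}+4q\,\frac{f_2^8 f_8^4 f_{18}^6}{f_1^8 f_4^4 f_9^4 f_{36}^2}-16 q^3\,\frac{f_4^6 f_{36}^2}{f_1^6 f_9^2}.
\]

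The mod $4$ verification is straightforward: the second and third summands carry factors of $4$ and $16$, so it suffices to handle $T_1 := \frac{f_2^4 f_4^8 f_{18}^6}{f_1^8 f_8^4 f_9^4 f_{36}^2}$. The congruence $(1-q^n)^4\equiv (1+q^{2n})^2\pmod 4$, equivalently $f_k^4\equiv f_{2k}^2/f_k^2\pmod 4$, applied iteratively to eliminate $f_1^8$, $f_9^4$, $f_2^8$, and $f_{18}^8$, collapses $T_1$ to $f_{72}^4/f_{36}^8=\phi(-q^{36})^{-4}\pmod 4$. Jacobi's four-square theorem yields $\phi(q)^4\equiv 1\pmod 8$, and $\phi(q)-\phi(-q)=4q\psi(q^8)$ gives $\phi(q)^4\equiv\phi(-q)^4\pmod{16}$; together these imply $\phi(-q^{36})^{-4}\equiv 1\pmod 8$, hence $T(q)\equiv 1\pmod 4$.

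The mod $3$ analysis is the subtle half of the argument and the main obstacle. Using $f_k^3\equiv f_{3k}\pmod 3$ to replace $f_9,f_{18},f_{36},f_{72}$ in the original generating function collapses it to $R\equiv \frac{f_1^{16}f_8^8}{f_2^8 f_4^{16}} = \frac{\phi(-q)^8}{\phi(-q^4)^8}\pmod 3$. The dissection $\phi(-q)=\phi(q^4)-2q\psi(q^8)$ reduces mod $3$ to $\phi(-q)\equiv\phi(q^4)+q\psi(q^8)\pmod 3$; expanding the eighth power by the binomial theorem and retaining only the contributions to $q^{4n}$ (indices $k\in\{0,4,8\}$, noting $\binom{8}{4}=70\equiv 1\pmod 3$), and then substituting $q^4\mapsto q$, gives
\[
T(q)\equiv \phi(-q)^{-8}\bigl(\phi(q)^8+q\,\phi(q)^4\psi(q^2)^4+q^2\psi(q^2)^8\bigr)\pmod 3.
\]
The closing moves are the elementary identity $a^2+ab+b^2\equiv (a-b)^2\pmod 3$, applied with $a=\phi(q)^4$ and $b=q\psi(q^2)^4$, and the classical Jacobi-type theta identity $\phi(q)^4-16q\psi(q^2)^4=\phi(-q)^4$, which mod $3$ reads $a-b\equiv\phi(-q)^4$. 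Together they collapse the parenthesized sum to $\phi(-q)^8$, so $T(q)\equiv \phi(-q)^{-8}\cdot\phi(-q)^8 = 1\pmod 3$; combining with the mod $4$ result via CRT gives $T(q)\equiv 1\pmod{12}$, i.e. $\overline{R_{4,9}}(4n)\equiv 0\pmod{12}$ for all $n\ge 1$. The main obstacle is spotting the cubic symmetry that permits closed-form evaluation of the mod $3$ ``multiples of $4$'' sum: without simultaneously invoking both $a^2+ab+b^2\equiv (a-b)^2\pmod 3$ and the Jacobi identity $\phi(q)^4-16q\psi(q^2)^4=\phi(-q)^4$, the mod $3$ calculation does not visibly telescope.
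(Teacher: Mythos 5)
Your argument is correct, and it reaches the conclusion by a genuinely different route than the paper. For the modulus $4$, the paper does not dissect at all: it expands the full generating function as a product of theta series, reduces modulo $4$ to $1+2\sum_{\text{odd }n}q^{n^2}+2\sum_{\text{odd }n}q^{9n^2}$, and reads off the vanishing at multiples of $4$ (this is the congruence \eqref{49n04}, whose proof also delivers the stronger mod $8$ and mod $24$ statements of Theorem \ref{T494knr}); your explicit two-fold $2$-dissection of $T(q)$ followed by $f_k^4\equiv f_{2k}^2\pmod 4$ is a perfectly good, more computational substitute, and your three-term formula for $T(q)$ checks out (I verified it through the coefficient of $q^4$). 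For the modulus $3$, the two arguments diverge completely: the paper applies the $3$-dissections \eqref{f33/f1}, \eqref{f3/f13}, \eqref{f1/f33} twice, reduces to showing $f_1^8f_2^8-qf_2^{24}-f_1^{24}\equiv 0\pmod 3$, and closes with the functional-equation trick $G(q)\equiv qG(q^2)\pmod 3$ of Lemma \ref{Gq=qGq}; you instead collapse the whole generating function to $\varphi(-q)^8/\varphi(-q^4)^8\pmod 3$, extract the residue class $4n$ via the $2$-dissection of $\varphi(-q)$ and the binomial coefficients $\binom{8}{k}\bmod 3$, and finish with $a^2+ab+b^2\equiv(a-b)^2\pmod 3$ together with Jacobi's $\varphi(q)^4-\varphi(-q)^4=16q\psi(q^2)^4$. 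Your mod $3$ closing is arguably slicker and avoids the induction lemma entirely; the paper's mod $4$ route is the one that generalizes to its later theorems.

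Two small slips to fix in a write-up. First, the congruence you state as ``$f_k^4\equiv f_{2k}^2/f_k^2\pmod 4$'' is wrong as written: $(1-q^n)^4\equiv(1+q^{2n})^2\pmod 4$ translates to $f_k^4\equiv f_{4k}^2/f_{2k}^2\pmod 4$, equivalently $f_k^4\equiv f_{2k}^2\pmod 4$ (the paper's \eqref{modp^k}); with the corrected form your reduction of $T_1$ goes through, and indeed one gets $T_1\equiv 1\pmod 4$ directly. Second, applying the $2$-dissection of $f_2/f_1^2$ with $q\mapsto q^9$ produces $f_{18}/f_9^2$, whereas the generating function contains its reciprocal $f_9^2/f_{18}=\varphi(-q^9)$; the identity you actually need there is $\varphi(-q^9)=\varphi(q^{36})-2q^9\psi(q^{72})$. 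Neither issue affects the validity of the final formula for $T(q)$ or of the conclusion.
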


Theorem \ref{T494n012} implies the congruences (16) and (17) given in \cite[Theorem 4.1]{Alanazi-Munagi-Saikia}:
\begin{align*}
    \overline{R_{4,9}}(8n+4)&\equiv0\pmod{12},\\
     \overline{R_{4,9}}(12n+4)&\equiv0\pmod{12}.
\end{align*}
 It also confirms the following conjecture proposed in \cite{Alanazi-Munagi-Saikia}.
\begin{conjecture}\cite[Conjecture 5.1]{Alanazi-Munagi-Saikia}
    For $n\geq0,\ell\geq2$, and $1\leq k\leq\ell$, we have \begin{equation*}\label{conj}
        \overline{R_{4,9}}(4\ell n+4k)\equiv 0\pmod{6}.
    \end{equation*}
\end{conjecture}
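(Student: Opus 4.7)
The plan is to extract a closed form for $\sum_{n\geq 0}\overline{R_{4,9}}(4n)\,q^n$ by two successive $2$-dissections of the generating function
$$A(q) = \frac{f_2 f_4^2 f_9^2 f_{72}}{f_1^2 f_8 f_{18} f_{36}^2},$$
and then to verify \eqref{494n012} modulo $4$ and modulo $3$ separately before combining via the Chinese Remainder Theorem.

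For the first dissection I would invoke Ramanujan's $2$-dissection of $\varphi$ in both of its classical shapes,
$$\frac{1}{f_1^2} = \frac{f_8^5}{f_2^5 f_{16}^2} + 2q\,\frac{f_4^2 f_{16}^2}{f_2^5 f_8}, \qquad f_1^2 = \frac{f_2 f_8^5}{f_4^2 f_{16}^2} - 2q\,\frac{f_2 f_{16}^2}{f_8},$$
the second after the substitution $q\mapsto q^9$ to handle the awkward factor $f_9^2$. Every remaining factor in $A(q)$ carries an even index and is therefore a power series in $q^2$. Multiplying out gives four products with $q$-prefactors $q^0,q,q^9,q^{10}$; collecting those with even exponents and replacing $q^2$ by $q$ produces
$$\sum_{n\geq 0}\overline{R_{4,9}}(2n)\,q^n = \frac{f_2^2 f_4^4 f_{36}^6}{f_1^4 f_8^2 f_{18}^4 f_{72}^2} - 4q^5\,\frac{f_2^4 f_8^2 f_{72}^2}{f_1^4 f_4^2 f_{18}^2}.$$
Reducing modulo $4$ kills the second summand, and the elementary congruence $f_k^4\equiv f_{2k}^2\pmod 4$ collapses the first to the constant series $1$. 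This already proves the stronger statement $\overline{R_{4,9}}(2n)\equiv 0\pmod 4$ for $n\geq 1$, which contains the mod-$4$ half of \eqref{494n012}.

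For the mod-$3$ half I would perform a second $2$-dissection on the above formula using the squared identity
$$\frac{1}{f_1^4} = \frac{f_8^{10}}{f_2^{10} f_{16}^4} + 4q\,\frac{f_4^2 f_8^4}{f_2^{10}} + 4q^2\,\frac{f_4^4 f_{16}^4}{f_2^{10} f_8^2}.$$
All remaining factors are again series in $q^2$, so isolating even exponents yields an explicit three-term formula for $\sum_{n\geq 0}\overline{R_{4,9}}(4n)\,q^n$ with leading summand $\dfrac{f_2^4 f_4^8 f_{18}^6}{f_1^8 f_8^4 f_9^4 f_{36}^2}$ and two further summands whose scalar coefficients are $4$ and $-16$. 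Reducing modulo $3$ via $f_k^3\equiv f_{3k}\pmod 3$ the constants become $1$ and $-1$, and I would then show, by systematic cancellation of the resulting eta-quotients, that the three summands collapse to the constant series $1\pmod 3$. The Chinese Remainder Theorem then combines the two pieces to yield \eqref{494n012}, with the case $n=0$ accounting for $\overline{R_{4,9}}(0)=1$.

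The main obstacle is the last step: after cube-reduction each of the three summands becomes an eta-quotient in many different indices (drawn from $\{1,2,3,4,6,8,9,12,18,24,27,36,54,72,108\}$), and the cancellations needed to see that their sum reduces to $1\pmod 3$ are not transparent and will likely require some auxiliary rearrangement. Should this direct argument prove too delicate, one may instead invoke the mod-$3$ half of \eqref{494n012}, which was proved elementarily in \cite[Theorem 4.1]{Nadji-Ahmia-Ramirez}, and combine it with the new mod-$4$ argument above via the Chinese Remainder Theorem.
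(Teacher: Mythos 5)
Your reduction of the conjecture to the single statement $\overline{R_{4,9}}(4m)\equiv 0\pmod{12}$ for $m\geq 1$ (via $4\ell n+4k=4(\ell n+k)$ with $\ell n+k\geq 1$) is exactly how the paper proceeds: it records the conjecture as an immediate consequence of Theorem \ref{T494n012}. Your modulo-$4$ argument, however, is genuinely different from the paper's and checks out: the two $2$-dissections you quote are the standard ones, the even part of the product is indeed
\begin{equation*}
\frac{f_2^2 f_4^4 f_{36}^6}{f_1^4 f_8^2 f_{18}^4 f_{72}^2}-4q^5\,\frac{f_2^4 f_8^2 f_{72}^2}{f_1^4 f_4^2 f_{18}^2},
\end{equation*}
and $f_l^4\equiv f_{2l}^2\pmod 4$ (the case $p=2$, $k=2$ of \eqref{modp^k}) collapses the first summand to $1$. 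This yields the clean statement $\overline{R_{4,9}}(2n)\equiv 0\pmod 4$ for $n\geq 1$, obtained more directly than in the paper, which instead extracts the mod-$4$ information from a theta-series expansion of the full generating function modulo $8$ (the route to \eqref{49n04}). Note, though, that for the conjecture itself only the modulus $6$ is required, the mod-$2$ half being the trivial \eqref{Tlu02}; all the real content sits in the mod-$3$ half.

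That mod-$3$ half is where your proposal has a gap. Your primary route ends with ``show by systematic cancellation that the three summands collapse to $1\pmod 3$,'' which you yourself flag as the main obstacle and do not carry out; as written this is not a proof, and it is unlikely to succeed by termwise cancellation of eta-quotients alone. The paper takes a different path: it dissects using $f_3^3/f_1$, $f_3/f_1^3$, and $f_1/f_3^3$ to reach $\sum_{n\geq0}\overline{R_{4,9}}(4n)q^n\equiv f_2^8/f_1^{16}-q\,f_2^{24}/f_1^{24}\pmod 3$, and then proves the needed identity $f_1^8f_2^8-qf_2^{24}-f_1^{24}\equiv 0\pmod 3$ by establishing the functional congruence $G(q)\equiv qG(q^2)\pmod 3$ and invoking Lemma \ref{Gq=qGq}; some device of this kind is what your version would also need at the final step. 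Your fallback---citing the elementary proof of $\overline{R_{4,9}}(4n)\equiv 0\pmod 3$ from \cite[Theorem 4.1]{Nadji-Ahmia-Ramirez}---does legitimately close the gap and, combined with your mod-$4$ work, proves the conjecture (indeed modulo $12$); but it turns the mod-$3$ part into a citation rather than a proof, whereas the paper supplies a new self-contained argument there.
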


We also prove a new congruence modulo 8 satisfied by $\overline{R_{4,9}}$.
\begin{theorem}\label{T493n08} For $n\geq1$, we have
   \begin{equation*} \label{493n08}\overline{R_{4,9}}(3n)\equiv 0\pmod{8}.
   \end{equation*}
\end{theorem}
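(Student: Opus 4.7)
The plan is to reduce the generating function $\sum_{n\ge 0}\overline{R_{4,9}}(n)q^n$ modulo $8$, extract its $3$-dissection, and show that the $q^{3n}$-part is congruent to $1$ modulo $8$; the theorem follows immediately.

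First, we exploit the classical congruence $f_k^8 \equiv f_{2k}^4 \pmod 8$ (an easy induction from $(1-q^k)^2 \equiv 1 - q^{2k} \pmod 2$). Multiplying numerator and denominator of
$$A(q) := \frac{f_2 f_4^2 f_9^2 f_{72}}{f_1^2 f_8 f_{18} f_{36}^2}$$
by $f_1^6$ and replacing $f_1^8$ by $f_2^4$ modulo $8$, the expression collapses to a product of Ramanujan theta values $\phi(-q) = f_1^2/f_2$:
$$A(q) \equiv \phi(-q)^3\,\phi(-q^4)\cdot\frac{\phi(-q^9)}{\phi(-q^{36})} \pmod 8.$$
The rightmost factor is a function of $q^9$, and hence a function of $q^3$, so it does not affect the $3$-dissection of $\phi(-q)^3\phi(-q^4)$.

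Next, we invoke the classical $3$-dissection
$$\phi(-q) = \phi(-q^9) - 2q\,f(-q^3, -q^{15}),$$
obtained by splitting $\sum_n (-1)^n q^{n^2}$ by the residue of $n$ modulo $3$, together with the analogous dissection $\phi(-q^4) = \phi(-q^{36}) - 2q\,V(q^3)$ for an explicit theta series $V(q^3)$. Writing $\phi(-q) = X - 2qY$ and $\phi(-q^4) = U - 2qV$ with $X,Y,U,V$ functions of $q^3$, the binomial expansion $(X-2qY)^3 \equiv X^3 + 2qX^2Y + 4q^2XY^2 \pmod 8$ gives
$$\phi(-q)^3\,\phi(-q^4) \equiv X^3U + 2q(X^2YU - X^3V) + 4q^2(XY^2U - X^2YV) \pmod 8,$$
so the $q^{3n}$-part is exactly $X^3U = \phi(-q^9)^3\phi(-q^{36})$.

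Finally, multiplying by the $q^3$-series $\phi(-q^9)/\phi(-q^{36}) = X/U$ (which preserves the residue class modulo $3$), the $q^{3n}$-part of $A(q) \pmod 8$ becomes $X^3U\cdot X/U = X^4 = \phi(-q^9)^4 = f_9^8/f_{18}^4$. A second application of $f_k^8 \equiv f_{2k}^4 \pmod 8$ with $k=9$ shows $\phi(-q^9)^4 \equiv 1 \pmod 8$. Thus $\sum_{n\ge 0}\overline{R_{4,9}}(3n)q^{3n} \equiv 1 \pmod 8$, proving the claim.

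The main obstacle is engineering the clean reduction in the first step so that the whole generating function collapses to a product of four $\phi$-values modulo $8$; the miraculous telescoping $X^3U \cdot (X/U) = X^4$ in the last step only works because the factor $\phi(-q^{36})$ that appears in the $q^{3n}$-piece of $\phi(-q)^3\phi(-q^4)$ is exactly what is needed to cancel the denominator coming from $f_{72}/f_{36}^2$. Once the form in the first step is identified, the remaining $3$-dissection and bookkeeping modulo $8$ are routine.
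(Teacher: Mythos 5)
Your proof is correct and follows essentially the same route as the paper: both arguments rest on the $3$-dissections of $f_2/f_1^2$ (equivalently of $\varphi(-q)$) and of $\varphi(-q^4)$, extraction of the $q^{3n}$-part, and the congruence $f_k^8\equiv f_{2k}^4\pmod 8$ to collapse that part to $1$. The only difference is bookkeeping order — you reduce $1/\varphi(-q)\equiv\varphi(-q)^3\pmod 8$ before dissecting, whereas the paper dissects exactly via \eqref{f2/f12} and \eqref{f12/f2} and reduces modulo $8$ at the end — and both yield the same conclusion.
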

In the next theorem, we establish infinitely many families of congruences modulo $24$.
\begin{theorem}\label{T494knr}
For $k\geq2$, $n\geq0$, and $r$, $1\leq r\leq k-1$, a quadratic nonresidue modulo k, we have
\begin{equation}\label{494knr08}
\overline{R_{4,9}}\left(4(kn+r)\right)\equiv0\pmod{24}.
\end{equation}
If $n\geq1$ is not an odd square, then we have 
    \begin{equation}\label{49n04}
        \overline{R_{4,9}}(n)\equiv0\pmod{4}.
    \end{equation}
    \end{theorem}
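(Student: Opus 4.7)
The plan is to handle both congruences by analyzing the generating function
\[
\sum_{n\geq 0}\overline{R_{4,9}}(n)q^n
= \frac{f_2 f_4^2 f_9^2 f_{72}}{f_1^2 f_8 f_{18} f_{36}^2}
= \overline{P}(q)\,\varphi(-q^4)\,\varphi(-q^9)\,\varphi(-q^{36})^{-1},
\]
where $\overline{P}(q)=f_2/f_1^2=1/\varphi(-q)$ is the generating function for overpartitions and $\varphi(-q^k)=f_k^2/f_{2k}=\sum_{n\in\mathbb{Z}}(-1)^n q^{kn^2}$ is Ramanujan's theta function. I will first prove \eqref{49n04} by a mod $4$ reduction of the four factors, and then sharpen to a mod $8$ calculation which, together with Theorem~\ref{T494n012} and the Chinese remainder theorem, yields \eqref{494knr08}.

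For \eqref{49n04}, use $\varphi(-q^k)\equiv 1+2\sum_{n\geq 1}q^{kn^2}\pmod 4$ (the signs drop since $\pm 2\equiv 2\pmod 4$) together with $(1+2U)^{-1}\equiv 1+2U\pmod 4$; multiplying the four factors and discarding cross-terms of size $\geq 4$ gives
\[
\sum_{n\geq 0}\overline{R_{4,9}}(n)q^n
\equiv \varphi(-q)+2\sum_{n\geq 1}\bigl(q^{4n^2}+q^{9n^2}+q^{36n^2}\bigr)\pmod 4.
\]
The right-hand side is supported on perfect squares, and at $n=m^2$ with $m\geq 1$ its coefficient equals $2(-1)^m+2[2\mid m]+2[3\mid m]+2[6\mid m]$. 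A four-case check on $\gcd(m,6)$ shows this is $\equiv 0\pmod 4$ except when $\gcd(m,6)=1$, in which case $m$ is odd and $n=m^2$ is an odd square; this establishes \eqref{49n04}.

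For \eqref{494knr08}, since $r$ is a quadratic nonresidue modulo $k$ the integer $m:=kn+r$ is never a perfect square, so it suffices to prove the stronger, $k$-free statement $\overline{R_{4,9}}(4m)\equiv 0\pmod{24}$ whenever $m\geq 1$ is not a perfect square. As Theorem~\ref{T494n012} already gives divisibility by $12$, only divisibility by $8$ requires further work. Using $(1+2S)^{-1}\equiv 1-2S+4S^2\pmod 8$ with $S_k=\sum_{n\geq 1}(-1)^n q^{kn^2}$, the product expands to
\[
\sum_{n\geq 0}\overline{R_{4,9}}(n)q^n\equiv 1+2L(q)+4M(q)\pmod 8,
\]
where $L(q)=S_4+S_9-S_{36}-S_1$ and $M(q)$ is a homogeneous quadratic in $S_1,S_4,S_9,S_{36}$. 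For non-square $m$ the $q^{4m}$-coefficient of $L$ vanishes, since each $S_k$ places $4m$ into $k\cdot\{\text{squares}\}$ which forces $m$ itself to be a square. Reducing $M$ modulo $2$ replaces each $S_j$ with the unsigned theta series $T_j=\sum_{n\geq 1}q^{jn^2}$, and a term-by-term expansion collapses the coefficient of $q^{4m}$ to $r_2(4m)+r_2(m)\pmod 2$, where $r_2(N)=\#\{(a,b)\in\mathbb{Z}_{\geq 1}^2:a^2+b^2=N\}$; the involution $(a,b)\leftrightarrow(b,a)$ shows $r_2(N)$ is odd exactly when $N=2c^2$, and this common description yields $r_2(4m)\equiv r_2(m)\pmod 2$. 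Hence $\overline{R_{4,9}}(4m)\equiv 0\pmod 8$ for non-square $m$, and combining with Theorem~\ref{T494n012} via the Chinese remainder theorem gives \eqref{494knr08}. The main obstacle is the mod $8$ bookkeeping for the many cross-terms of $M$, but the cancellation above handles it uniformly.
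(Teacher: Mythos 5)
Your proof is correct, and it follows the same overall strategy as the paper: write the generating function as $\varphi(-q^4)\varphi(-q^9)/\bigl(\varphi(-q)\varphi(-q^{36})\bigr)$, expand modulo $8$ (resp.\ modulo $4$) into theta series, observe that the coefficient of $q^{4m}$ can only be nonzero modulo $8$ when $m$ is a perfect square, invoke the quadratic nonresidue hypothesis, and import the modulus $3$ from Theorem~\ref{T494n012}. Where you differ is in the bookkeeping, and your version is arguably cleaner. The paper inverts $\varphi(-q)$ and $\varphi(-q^{36})$ via \eqref{phiq^418} and the infinite product \eqref{1/phi-q}, producing a six-fold product of theta functions whose mod-$8$ expansion \eqref{49nvarphi} then has to be tamed by the list of double-sum identities \eqref{B1}--\eqref{B2} before the $4n$-dissection; you instead invert directly with $(1+2S)^{-1}\equiv 1-2S+4S^2\pmod 8$, which keeps the expansion to a linear part $2L$ and a quadratic part $4M$, and you dispose of the eight cross-terms in $M$ at one stroke by reducing to the parity identity $r_2(4m)\equiv r_2(m)\pmod 2$ (I checked that the four copies of $\#\{a^2+9b^2=m\}$ and the two copies of $r_2(m/9)$ do cancel modulo $2$ as you assert, so the collapse to $r_2(4m)+r_2(m)$ is right). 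Both routes prove the same slightly stronger statement — $\overline{R_{4,9}}(4m)\equiv 0\pmod 8$ for every non-square $m$ — of which \eqref{494knr08} is the special case $m=kn+r$. Your mod-$4$ derivation of \eqref{49n04} likewise matches the paper's conclusion; your case $\gcd(m,6)=3$ gives coefficient $0$ rather than $2$, but since the theorem is only a one-directional implication this causes no trouble. The only thing I would ask you to write out in a final version is the term-by-term verification of the $M$-cancellation, which is currently asserted rather than displayed.
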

When $k=4$ and $r=2$, it follows from \eqref{494knr08} that
\begin{equation*}
    \overline{R_{4,9}}(16n+8)\equiv0\pmod{24},
\end{equation*}
which is the congruence $(19)$ given in \cite[Theorem 4.1]{Alanazi-Munagi-Saikia}.

\bigskip 

To close our work, we consider the congruences $(20)$, $(21)$, and $(22)$ given in \cite[Theorem 4.1]{Alanazi-Munagi-Saikia} in order to provide an elementary proof for each. 
\begin{theorem}\label{T49} For $n\geq 0$, we have
    \begin{align}
     \label{4918n1296} \overline{R_{4,9}}(18n+12)&\equiv0\pmod{96},\\
     \label{4918n1548} \overline{R_{4,9}}(18n+15)&\equiv0\pmod{48},\\
     \label{4924n20216} \overline{R_{4,9}}(24n+20)&\equiv0\pmod{216}.
    \end{align}
\end{theorem}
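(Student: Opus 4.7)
The plan is to prove the three congruences by performing targeted dissections of the generating function
\[
F(q) := \sum_{n\geq 0}\overline{R_{4,9}}(n)\, q^n = \frac{f_2 f_4^2 f_9^2 f_{72}}{f_1^2 f_8 f_{18} f_{36}^2},
\]
while using the earlier theorems of this paper to dispose of part of each modulus.

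The first step is to extract the free factors from the results already established. Since $18n+12 = 3(6n+4)$ and $18n+15 = 3(6n+5)$, Theorem~\ref{T493n08} gives
\[
\overline{R_{4,9}}(18n+12),\; \overline{R_{4,9}}(18n+15) \equiv 0 \pmod{8},
\]
and since $5$ is a quadratic nonresidue modulo $6$ and $24n+20 = 4(6n+5)$, Theorem~\ref{T494knr} with $(k,r)=(6,5)$ gives $\overline{R_{4,9}}(24n+20) \equiv 0 \pmod{24}$. To complete the proof it therefore suffices to establish the finer congruences $\overline{R_{4,9}}(18n+12) \equiv 0 \pmod{32}$ and $\pmod{3}$; $\overline{R_{4,9}}(18n+15) \equiv 0 \pmod{16}$ and $\pmod{3}$; and $\overline{R_{4,9}}(24n+20) \equiv 0 \pmod{27}$. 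Combining these with the earlier contributions and taking least common multiples yields the stated moduli $96$, $48$, and $216$.

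For each residual congruence I would begin with a standard $2$-dissection of $1/f_1^2$ to split the even and odd parts of $F(q)$, followed by a $3$-, $6$-, or $9$-dissection (based on the classical $3$-dissections of $\phi(-q)$, $\psi(q)$, and $f_1$) to isolate the arithmetic progressions $18n+12$, $18n+15$, and $24n+20$. At each step the simplifications $f_k^2 \equiv f_{2k} \pmod{2}$ and $f_k^3 \equiv f_{3k} \pmod{3}$ collapse the resulting eta-quotients into a form in which the claimed prime-power factor is visible in front of a genuine $q$-series.

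The main obstacle is the mod-$27$ refinement in \eqref{4924n20216}. Unlike the other two cases, a mod-$3$ dissection argument is insufficient here: one must track second- and higher-order binomial corrections, using the finer identity $f_k^9 \equiv f_{3k}^3 \pmod{9}$ together with a careful accounting of cross terms. I plan to attack this by first isolating $\sum \overline{R_{4,9}}(4n)q^n$ explicitly (along the lines of the proof of Theorem~\ref{T494n012}), then performing a $6$-dissection to extract the residue class $6n+5$, and finally analyzing the resulting series modulo $27$ through an iterated $3$-dissection. This is the most computation-heavy step, but remains within the elementary $q$-series framework used throughout the paper.
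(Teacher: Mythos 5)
Your reduction step is sound and matches the paper's: the factor $8$ for $18n+12$ and $18n+15$ does follow from Theorem~\ref{T493n08} since both are multiples of $3$, and $(k,r)=(6,5)$ in \eqref{494knr08} legitimately supplies the factor $24$ for $24n+20$ (the squares modulo $6$ are $\{0,1,3,4\}$, so $5$ is indeed a nonresidue). Your bookkeeping of what remains --- mod $32$ and mod $3$ for $18n+12$, mod $16$ and mod $3$ for $18n+15$, mod $27$ for $24n+20$ --- is also correct, and is exactly the decomposition the paper uses (the paper delegates the two mod-$3$ pieces to Ghoshal--Jana and proves the rest directly). Note, though, that for the first two cases your opening step buys nothing: the mod $8$ you extract is subsumed by the mod $32$ and mod $16$ you still have to prove.

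The genuine gap is that everything after the reduction is a plan rather than a proof. The residual congruences are precisely where all the work lives: each one requires an explicit chain of $2$- and $3$-dissections (in the paper, roughly four successive extractions for the mod $32$ case alone), and the conclusion rests on delicate cancellations among the resulting eta-quotients --- e.g.\ the mod $32$ computation for $36n+12$ and $36n+30$ terminates only because pairs of terms like $8f_2^4$ and $-8f_2^4$ cancel exactly after applying \eqref{modp^k}. Nothing in your outline certifies that such cancellations occur. Your stated simplification tools are also too weak for the task as written: $f_k^2\equiv f_{2k}\pmod 2$ and $f_k^3\equiv f_{3k}\pmod 3$ are useless when working modulo $32$, $16$, or $27$; what is actually needed is $f_l^{p^k}\equiv f_{lp}^{p^{k-1}}\pmod{p^k}$ applied at the correct power, exploiting the fact that terms already carrying a factor $8$ or $9$ need only be controlled modulo $4$ or modulo $3$. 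Until the dissections are carried out and the cancellations verified (or the mod-$3$ pieces are either proved or explicitly cited), the proposal does not establish the theorem.
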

\section{Preliminaries}
We recall the definition of Ramanujan's theta function
$$\varphi(q):=\sum_{n=-\infty}^\infty q^{n^2}=1+2\sum_{n\geq1}^\infty q^{n^2},$$
and observe that
\begin{equation}\label{phiq^418}
\varphi(q)^4\equiv1\pmod{8}.
\end{equation}

We now collect several identities that we will utilize in our work below.
\begin{lemma} We have
    \begin{align}
         \label{f25/f12f42} \varphi(q)&=\frac{f_2^5}{f_1^2f_4^2},\\
         \label{sf12/f2}\varphi(-q)&=\sum_{k=-\infty}^{\infty}(-1)^kq^{k^2}=\frac{f_1^2}{f_2},\\
        \label{phiq} \varphi(q)&=\frac{\varphi(-q^2)^2}{\varphi(-q)}\\\label{1/phi-q}\frac{1}{\varphi(-q)}&=\varphi(q)\varphi(q^2)^2\varphi(q^4)^4\cdots.
    \end{align}
\end{lemma}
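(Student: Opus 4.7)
The plan is to prove the four identities of the lemma by invoking Jacobi's triple product identity for the first two, deducing the third algebraically, and obtaining the fourth by iterating the third. All four are classical, so the work is essentially bookkeeping, but I would present them in an order that makes each step depend only on the previous ones.

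First I would recall Jacobi's triple product in the form
\[
\sum_{n=-\infty}^{\infty} z^{n} q^{n^{2}} \;=\; \prod_{n\geq 1} (1-q^{2n})(1+zq^{2n-1})(1+z^{-1}q^{2n-1}).
\]
Specializing $z=1$ yields $\varphi(q)=f_{2}\prod_{n\geq 1}(1+q^{2n-1})^{2}$, and then I would rewrite the odd factor as $(1+q^{2n-1})=(1-q^{4n-2})/(1-q^{2n-1})$, so that the product collapses, via $\prod(1-q^{2n-1})=f_{1}/f_{2}$ and $\prod(1-q^{4n-2})=f_{2}/f_{4}$, to $f_{2}^{2}/(f_{1}f_{4})$. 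Squaring and multiplying by $f_{2}$ gives \eqref{f25/f12f42}. Specializing $z=-1$ in the triple product and noting $(-1)^{n^{2}}=(-1)^{n}$ yields the series equality in \eqref{sf12/f2}, and the product collapses directly to $f_{1}^{2}/f_{2}$, giving the second identity.

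For \eqref{phiq} I would simply verify it as an algebraic identity among the three $\eta$-quotients already in hand: replace $q$ by $q^{2}$ in \eqref{sf12/f2} to obtain $\varphi(-q^{2})=f_{2}^{2}/f_{4}$, and then
\[
\frac{\varphi(-q^{2})^{2}}{\varphi(-q)} \;=\; \frac{f_{2}^{4}/f_{4}^{2}}{f_{1}^{2}/f_{2}} \;=\; \frac{f_{2}^{5}}{f_{1}^{2}f_{4}^{2}} \;=\; \varphi(q),
\]
by \eqref{f25/f12f42}. This is a one-line check.

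For \eqref{1/phi-q} I would iterate \eqref{phiq}. Solving that identity for $1/\varphi(-q)$ gives $1/\varphi(-q)=\varphi(q)/\varphi(-q^{2})^{2}$. Squaring the analogous equation with $q$ replaced by $q^{2}$ yields $1/\varphi(-q^{2})^{2}=\varphi(q^{2})^{2}/\varphi(-q^{4})^{4}$, and in general squaring $k$ times gives $1/\varphi(-q^{2^{k-1}})^{2^{k-1}}=\varphi(q^{2^{k-1}})^{2^{k-1}}/\varphi(-q^{2^{k}})^{2^{k}}$. Telescoping these for $k=1,2,\ldots,N$ produces
\[
\frac{1}{\varphi(-q)} \;=\; \varphi(q)\,\varphi(q^{2})^{2}\,\varphi(q^{4})^{4}\cdots\varphi(q^{2^{N-1}})^{2^{N-1}}\cdot\frac{1}{\varphi(-q^{2^{N}})^{2^{N}}}.
\]
The only subtlety, and the one genuinely requiring an argument, is convergence of the infinite product as $N\to\infty$: the main obstacle is to justify that the residual factor $\varphi(-q^{2^{N}})^{-2^{N}}$ tends to $1$ as a formal power series. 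This follows because $\varphi(-q^{2^{N}})=1-2q^{2^{N}}+O(q^{2^{N+1}})$, so in the $q$-adic topology on $\mathbb{Z}[[q]]$ its $2^{N}$-th power differs from $1$ only in degrees $\geq 2^{N}$, and thus each coefficient of the partial product on the right stabilizes for $N$ large enough. I would state this convergence argument explicitly and then pass to the limit to conclude \eqref{1/phi-q}.
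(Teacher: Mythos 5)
Your proof is correct. The paper itself does not prove these identities at all: its ``proof'' is a one-line citation of (1.5.6), (1.5.8), (1.5.14), and (1.5.16) in Hirschhorn's book, so any actual argument you supply is necessarily a different route. What you do is reconstruct the standard derivations: Jacobi's triple product with $z=\pm1$ gives \eqref{f25/f12f42} and \eqref{sf12/f2} (your manipulations $\prod(1+q^{2n-1})=f_2^2/(f_1f_4)$ and $\prod(1-q^{2n-1})=f_1/f_2$ are right), \eqref{phiq} then reduces to a one-line $\eta$-quotient check using $\varphi(-q^2)=f_2^2/f_4$, and \eqref{1/phi-q} follows by telescoping \eqref{phiq} after raising the $q\mapsto q^{2^{k-1}}$ instance to the power $2^{k-1}$. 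You are also right to flag, and correctly dispose of, the one nontrivial point: the residual factor $\varphi(-q^{2^N})^{-2^N}=1+O(q^{2^N})$ in the $q$-adic topology, so the partial products stabilize coefficientwise. What your approach buys is self-containedness (modulo assuming the triple product, which is a weaker import than citing all four identities); what the paper's citation buys is brevity, which is reasonable for results this classical. No gaps.
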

\begin{proof}
Identities \eqref{f25/f12f42}, \eqref{sf12/f2}, \eqref{phiq}, and \eqref{1/phi-q} correspond to (1.5.6), (1.5.8), (1.5.14), and (1.5.16), respectively, in \cite{Hirschhorn}.
\end{proof}
\begin{lemma} We have
		\begin{align}
			\label{f13} f_1^3&=\sum_{m\geq0}(-1)^m(2m+1)q^{m(m+1)/2},\\
			\label{1/f14}\frac{1}{f_1^4}&=\frac{f_4^{14}}{f_2^{14}f_8^4}+4q\frac{f_4^2f_8^4}{f_2^{10}}\\
			\label{f9/f1}\frac{f_9}{f_1}&=\frac{f_{12}^3f_{18}}{f_2^2f_6f_{36}}+q\frac{f_4^2f_6f_{36}}{f_2^3f_{12}},\\
			\label{f33/f1} \frac{f_3^3}{f_1}&=\frac{f_4^3f_6^2}{f_2^2f_{12}}+q\frac{f_{12}^3}{f_4},\\
			\label{f3/f13}\frac{f_3}{f_1^3}&=\frac{f_4^6f_6^3}{f_2^9f_{12}^2}+3q\frac{f_4^2f_6f_{12}^2}{f_2^7},\\
			\label{f32/f12} \frac{f_3^2}{f_1^2}&=\frac{f_4^4f_6f_{12}^2}{f_2^5f_8f_{24}}+2q\frac{f_4f_6^2f_8f_{24}}{f_2^4f_{12}},\\
			\label{f1/f33}\frac{f_1}{f_3^3}&=\frac{f_2f_4^2f_{12}^2}{f_6^7}-q\frac{f_2^3f_{12}^6}{f_4^2f_6^9},\\
			\label{f3/f1}\frac{f_3}{f_1}&=\frac{f_4f_6f_{16}f_{24}^2}{f_2^2f_8f_{12}f_{48}}+q\frac{f_6f_8^2f_{48}}{f_2^2f_{16}f_{24}}\\
			\label{f1f3}f_1f_3&=\frac{f_2f_8^2f_{12}^4}{f_4^2f_6f_{24}^2}-q\frac{f_4^4f_6f_{24}^2}{f_2f_8^2f_{12}^2}.
		\end{align}
	\end{lemma}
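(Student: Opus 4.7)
The proof naturally divides into three parts, one per congruence. Write
\[
G(q):=\sum_{n\geq 0}\overline{R_{4,9}}(n)\,q^{n}=\frac{f_{2}f_{4}^{2}f_{9}^{2}f_{72}}{f_{1}^{2}f_{8}f_{18}f_{36}^{2}}.
\]
A useful preliminary remark is that the mod-$2$ reductions $f_{1}^{2}\equiv f_{2}$, $f_{4}^{2}\equiv f_{8}$, $f_{9}^{2}\equiv f_{18}$, $f_{36}^{2}\equiv f_{72}$ collapse $G(q)$ to $1\pmod{2}$, so $\overline{R_{4,9}}(n)$ is automatically even for every $n\geq 1$; this gives one factor of $2$ in each of the three congruences for free. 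Supplementary factors of $2$ will come from \eqref{phiq^418}, while the factors of $3$ (and higher powers) will be obtained by combining $3$-dissections with the mod-$3$ reduction $f_{k}^{3}\equiv f_{3k}$.

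For \eqref{4918n1296}, note that $18n+12=2(9n+6)$. The plan is to $2$-dissect $G(q)$ by rewriting $1/f_{1}^{2}=f_{1}^{2}/f_{1}^{4}$ and applying \eqref{1/f14}; extracting the even part yields the generating function for $\overline{R_{4,9}}(2n)$ as an eta-quotient in $q$. Next, a $9$-dissection, realized most conveniently as iterated $3$-dissections guided by \eqref{f9/f1} and \eqref{f3/f1}, isolates the arithmetic progression $9n+6$. The resulting series is then attacked modulo $32$ (by disentangling $\varphi(q)$-factors via \eqref{f25/f12f42} and \eqref{phiq}, then using \eqref{phiq^418}) and modulo $3$ (by $f_{k}^{3}\equiv f_{3k}$), and the two reductions are combined via the Chinese remainder theorem to reach modulus $96$.

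For \eqref{4918n1548}, an analogous argument works: since $18n+15=2(9n+7)+1$, one extracts the odd part in the initial $2$-dissection step and then isolates the progression $9n+7$ via the same $9$-dissection, finishing with a mod-$16$ and mod-$3$ analysis. For \eqref{4924n20216}, the identity $24n+20=4(6n+5)$ together with Theorem \ref{T494n012} already supplies divisibility by $12$; one then strengthens this to $216=8\cdot 27$ by iterating the $2$-dissection twice (via \eqref{1/f14} followed by a further even-part extraction) to reach $\overline{R_{4,9}}(4n)$, performing a $6$-dissection to isolate $6n+5$, and finally using \eqref{phiq^418} to promote mod $4$ to mod $8$ and several successive $3$-dissections, keyed to the explicit factor of $3$ in \eqref{f3/f13}, to promote mod $3$ to mod $27$.

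The principal obstacle throughout is algebraic bookkeeping: each dissection produces a larger eta-quotient involving higher levels ($f_{12}$, $f_{18}$, $f_{24}$, $f_{36}$, $f_{72}$), and repeated massaging through \eqref{f9/f1}--\eqref{f1f3} is required to bring the expression back into a form where the mod-$32$ or mod-$27$ reduction is visible. The hardest case is expected to be \eqref{4924n20216}, because obtaining the full factor of $27$ demands surfacing explicit $3$'s rather than merely reducing $f_{k}^{3}$ to $f_{3k}$ a single time, and this will almost certainly require a carefully chosen sequence of $3$-dissections employing \eqref{f33/f1}, \eqref{f3/f13}, and \eqref{f32/f12} in tandem.
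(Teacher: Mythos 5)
Your proposal does not address the statement at hand. The statement is the lemma collecting the nine $q$-series identities \eqref{f13}--\eqref{f1f3}: the theta expansion of $f_1^3$, the $2$-dissections of $1/f_1^4$, $f_3^3/f_1$, $f_3/f_1^3$, $f_3^2/f_1^2$, $f_3/f_1$, $f_1f_3$, the $2$-dissection of $f_9/f_1$, and the companion identity for $f_1/f_3^3$. What you have written instead is a strategy outline for Theorem \ref{T49}, i.e.\ the congruences \eqref{4918n1296}, \eqref{4918n1548}, and \eqref{4924n20216} for $\overline{R_{4,9}}$ --- a different result entirely, and one whose proof in the paper \emph{uses} the lemma you were asked to prove. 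Nothing in your text establishes, or even mentions a method for establishing, any of the identities \eqref{f13}--\eqref{f1f3}.

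For the record, the paper disposes of this lemma by citation: \eqref{f13} is Jacobi's identity, (1.7.1) in Hirschhorn's book; \eqref{1/f14} is from Brietzke--da Silva--Sellers; \eqref{f9/f1} is from Xia--Yao; \eqref{f33/f1}, \eqref{f3/f13}, \eqref{f32/f12} are from Xia--Yao (2013); \eqref{f1/f33} follows from \eqref{f33/f1} by replacing $q$ with $-q$ and using $(-q;-q)_\infty = f_2^3/(f_1f_4)$; and \eqref{f3/f1}, \eqref{f1f3} are from da Silva--Sellers. If you wish to prove these from scratch, the standard route is via the theta-function machinery ($\varphi$, $\psi$, and the Jacobi triple product) together with the $2$-dissections of $\varphi(q)$ and $\psi(q)$; but a plan for the $\overline{R_{4,9}}$ congruences is not a proof of this lemma.
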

	\begin{proof}
		Identity~\eqref{f13} is given as Equation~(1.7.1) in~\cite{Hirschhorn}, while \eqref{1/f14} appears in~\cite{Brietzke-daSilva-Sellers}. The relation~\eqref{f9/f1} is established in~\cite[Lemma~3.5]{Xia-Yao}. Identities~\eqref{f33/f1}, \eqref{f3/f13}, and \eqref{f32/f12} are all proved in~\cite{Xia-Yao_2013}. By substituting $q$ with $-q$ in~\eqref{f33/f1} and applying the fact that
		\begin{equation}
			(-q;-q)_\infty = \frac{f_2^3}{f_1 f_4},
		\end{equation}
		we deduce~\eqref{f1/f33}. Lastly, \eqref{f3/f1} and~\eqref{f1f3} correspond to (31) and (32), respectively, in~\cite{daSilva-Sellers}.
	\end{proof}
   \begin{lemma}
		\begin{align}
			\label{f1f2}f_1f_2&=\frac{f_6f_9^4}{f_3f_{18}^2}-qf_9f_{18}-2q^2\frac{f_3f_{18}^4}{f_6f_9^2},\\
			\label{f2/f12}\frac{f_2}{f_1^2}&=\frac{f_6^4f_9^6}{f_3^8f_{18}^3}+2q\frac{f_6^3f_9^3}{f_3^7}+4q^2\frac{f_6^2f_{18}^3}{f_3^6},\\
			\label{f1/f4}\frac{f_1}{f_4}&=\frac{f_6f_9f_{18}}{f_{12}^3}-q\frac{f_3f_{18}^4}{f_9^2f_{12}^3}-q^2\frac{f_6^2f_9f_{36}^3}{f_{12}^4f_{18}^2},\\
			\label{1/f1f2}\frac{1}{f_1f_2}&=\frac{f_9^9}{f_3^6f_6^2f_{18}^3}+q\frac{f_9^6}{f_3^5f_6^3}+3q^2\frac{f_9^3f_{18}^3}{f_3^4f_6^4}-2q^3\frac{f_{18}^6}{f_3^3f_6^5}+4q^4\frac{f_{18}^9}{f_3^2f_6^6f_9^3},\\
			\label{f12/f2}\frac{f_1^2}{f_2}&=\frac{f_9^2}{f_{18}}-2q\frac{f_3f_{18}^2}{f_6f_9}.
		\end{align}
	\end{lemma}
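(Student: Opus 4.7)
These five identities are all three-dissections of classical $q$-infinite products: \eqref{f12/f2} for $\varphi(-q) = f_1^2/f_2$, \eqref{f2/f12} for its reciprocal, \eqref{f1f2} and \eqref{1/f1f2} for $f_1 f_2$ and its reciprocal, and \eqref{f1/f4} for $f_1/f_4$. My plan is to establish the foundational identity \eqref{f12/f2} from scratch via series manipulation and Jacobi's triple product, and then derive the remaining four by algebraic reduction to \eqref{f12/f2} and analogous direct dissections.

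For \eqref{f12/f2}, I would use \eqref{sf12/f2} to write $\varphi(-q) = \sum_{n \in \mathbb{Z}}(-1)^n q^{n^2}$ and split the sum by the residue of $n$ modulo $3$. The class $n = 3m$ contributes $\varphi(-q^9) = f_9^2/f_{18}$. The classes $n = 3m+1$ and $n = 3m-1$, after the substitution $m \mapsto -m$ in one of them, combine into $-2q \sum_m (-1)^m q^{9m^2+6m}$, which is the Ramanujan theta series $f(-q^{15},-q^3)$. Applying Jacobi's triple product together with the elementary factorizations $(q^3;q^{18})_\infty (q^9;q^{18})_\infty (q^{15};q^{18})_\infty = (q^3;q^6)_\infty = f_3/f_6$ and $(q^9;q^{18})_\infty = f_9/f_{18}$ simplifies this sum to $f_3 f_{18}^2/(f_6 f_9)$, yielding \eqref{f12/f2}.

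For the reciprocal \eqref{f2/f12}, I would apply the factorization $\frac{1}{A-B} = \frac{A^2 + AB + B^2}{A^3 - B^3}$ to \eqref{f12/f2} with $A = \varphi(-q^9)$ and $B = 2q \cdot f_3 f_{18}^2/(f_6 f_9)$. Since $A$ is a function of $q^3$ and $B$ is $q$ times a function of $q^3$, the denominator $A^3 - B^3$ is purely a function of $q^3$, and the three summands in the numerator $A^2 + AB + B^2$ naturally produce the three terms on the right-hand side of \eqref{f2/f12} after simplification. The identities \eqref{f1f2} and \eqref{1/f1f2} would be obtained by analogous dissections, starting from the pentagonal number theorem \eqref{f13} for $f_1$, multiplying by $f_2$, and then inverting via the same rationalization trick (with a five-term expansion in the case of \eqref{1/f1f2}). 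For \eqref{f1/f4}, I would use the identity $(-q;-q)_\infty = f_2^3/(f_1 f_4)$ to express $f_1/f_4$ in terms of $f_1, f_2$, and the function $(-q;-q)_\infty$, all of whose three-dissections are already at hand.

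The main obstacle is the auxiliary $f_{3k}$-product identities required when simplifying the denominators $A^3 - B^3$ that arise in the reciprocal derivations. For instance, matching the denominator from \eqref{f2/f12} against the conjectured first term on its right-hand side amounts to proving $f_2^4 f_3^9 - 8q f_1^3 f_2 f_6^9 = f_1^8 f_3 f_6^4$ (after the substitution $q^3 \mapsto q$), and the five-term dissection \eqref{1/f1f2} requires several more such identities. Each is deducible from Jacobi's triple product, but the bookkeeping is intricate. Since all five identities are classical and catalogued in Hirschhorn's monograph \cite{Hirschhorn}, an efficient alternative — consistent with the treatment of the dissection identities in the preceding lemmas of this paper — is simply to cite these references directly in the final write-up.
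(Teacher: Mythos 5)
Your proposal and the paper diverge in an instructive way: the paper offers no derivation at all for this lemma, proving it purely by citation (\eqref{f1f2} to Hirschhorn--Sellers 2014, \eqref{f2/f12} to Hirschhorn--Sellers 2005, \eqref{f1/f4} to Das--Maity--Saikia, \eqref{1/f1f2} to a lemma of Sellers, and \eqref{f12/f2} to (14.3.2) of \cite{Hirschhorn}), whereas you sketch genuine proofs. Your derivation of \eqref{f12/f2} is correct and complete in outline: splitting $\varphi(-q)=\sum(-1)^nq^{n^2}$ by $n\bmod 3$ does give $\varphi(-q^9)-2qf(-q^3,-q^{15})$, and the triple-product evaluation $f(-q^3,-q^{15})=f_3f_{18}^2/(f_6f_9)$ checks out. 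The rationalization $1/(A-B)=(A^2+AB+B^2)/(A^3-B^3)$ is likewise the standard route to \eqref{f2/f12} and \eqref{1/f1f2}, but you are right to flag that the substance then shifts to proving a closed product form for $A^3-B^3$ (your identity $f_2^4f_3^9-8qf_1^3f_2f_6^9=f_1^8f_3f_6^4$ is the correct one and does hold); these auxiliary identities are themselves nontrivial theta-function facts, so your sketch is a program rather than a finished proof. Your plan for \eqref{f1/f4} is the least developed, since writing $f_1/f_4=(f_1^2/f_2^3)(-q;-q)_\infty$ still requires a $3$-dissection of $1/f_2^3$ (or some substitute), which you do not supply. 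That said, your closing recommendation --- to cite the literature, as is done for the neighboring lemmas --- is exactly what the paper does, so the two treatments are ultimately compatible: the paper buys brevity at the cost of self-containment, while your route, if completed, would make the lemma self-contained at the cost of several additional product identities.
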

	\begin{proof}
		A proof of~\eqref{f1f2} is provided in~\cite{Hirschhorn-Sellers_2014}, while \eqref{f2/f12} is established in~\cite{Hirschhorn-Sellers_2005}. For a proof of~\eqref{f1/f4}, see~\cite[Lemma~5.1]{Das-Maity-Saikia}. Equation~\eqref{1/f1f2} is given as Lemma~9 in~\cite{Sellers}. Identity~\eqref{f12/f2} appears as~(14.3.2) in~\cite{Hirschhorn}.
	\end{proof}
\begin{lemma}
    For a prime $p$ and positive integers $k$ and $l$,
    \begin{equation}\label{modp^k}
        f_l^{p^k}\equiv f_{lp}^{p^{k-1}}\pmod{p^k}.
    \end{equation}
\end{lemma}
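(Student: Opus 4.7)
The plan is to reduce the lemma to a purely polynomial congruence: I will show that
\[
(1-x)^{p^k} \equiv (1-x^p)^{p^{k-1}} \pmod{p^k}
\]
in $\mathbb{Z}[x]$, then substitute $x = q^{li}$ and multiply over $i \geq 1$. Since each coefficient of $q^n$ in an infinite $q$-Pochhammer product depends only on finitely many factors, and since multiplication of formal power series respects congruence modulo $p^k$, the lemma follows at once from the polynomial statement.

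I prove the polynomial identity by induction on $k$. The base case $k = 1$ is the standard Frobenius observation: $(1-x)^p = \sum_{j=0}^p \binom{p}{j}(-x)^j \equiv 1 - x^p \pmod p$, because $p \mid \binom{p}{j}$ for $1 \leq j \leq p-1$. For the inductive step, suppose $(1-x)^{p^{k-1}} = (1-x^p)^{p^{k-2}} + p^{k-1} A(x)$ for some $A(x) \in \mathbb{Z}[x]$. Raising to the $p$-th power and expanding gives
\[
(1-x)^{p^k} \;=\; \sum_{j=0}^{p} \binom{p}{j} (1-x^p)^{p^{k-2}(p-j)} \bigl(p^{k-1} A(x)\bigr)^{j}.
\]
The $j=0$ term is exactly $(1-x^p)^{p^{k-1}}$. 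For each remaining $j$ I verify that the $p$-adic valuation is at least $k$: when $1 \leq j \leq p-1$, the binomial coefficient $\binom{p}{j}$ contributes one extra factor of $p$, giving total valuation $1 + j(k-1) \geq k$; when $j = p$, the power $p^{p(k-1)}$ already satisfies $p(k-1) \geq k$, since $(p-1)(k-1) \geq 1$ for $p \geq 2$ and $k \geq 2$. Hence every $j \geq 1$ contribution vanishes modulo $p^k$, and the induction closes.

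With the polynomial congruence in hand, substituting $x = q^{li}$ and multiplying over $i \geq 1$ yields
\[
f_l^{p^k} \;=\; \prod_{i\geq 1}(1-q^{li})^{p^k} \;\equiv\; \prod_{i\geq 1}(1-q^{lpi})^{p^{k-1}} \;=\; f_{lp}^{p^{k-1}} \pmod{p^k}.
\]
The only real obstacle is the bookkeeping in the inductive step — verifying that every cross term in the binomial expansion carries enough powers of $p$ — but this reduces to the two valuation inequalities above, and no further complications arise.
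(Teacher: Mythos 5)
Your proof is correct and complete: the induction on $k$ via the binomial expansion, with the valuation bounds $1+j(k-1)\geq k$ for $1\leq j\leq p-1$ and $p(k-1)\geq k$ for $j=p$, is exactly the standard argument, and the passage from the polynomial congruence to the infinite product is handled properly. The paper itself gives no proof — it only cites Lemma 3 of da Silva and Sellers — and your argument is essentially the one found there, so there is nothing substantive to compare.
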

\begin{proof}
See~\cite[Lemma~3]{daSilva-Sellers}.
\end{proof}
Next, we quickly prove that $\overline{R_{\ell,\mu}}(n)$ is almost always even.
\begin{lemma} For $n\geq 1$, we have 
    \begin{equation} \label{Tlu02}
       \overline{R_{\ell,\mu}}(n)\equiv0\pmod{2}.
    \end{equation}
\end{lemma}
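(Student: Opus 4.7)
The plan is to reduce the generating function for $\overline{R_{\ell,\mu}}(n)$ modulo $2$ using \eqref{modp^k} and observe that it collapses to $1$.

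First I would apply \eqref{modp^k} with $p = 2$ and $k = 1$, which yields the congruence $f_m^2 \equiv f_{2m} \pmod{2}$ for every positive integer $m$. I would then apply this to each of the four squared factors appearing in
\[
\sum_{n=0}^{\infty} \overline{R_{\ell,\mu}}(n) q^n = \frac{f_2 f_{\ell}^2 f_{\mu}^2 f_{2\mu\ell}}{f_1^2 f_{2\ell} f_{2\mu} f_{\mu\ell}^2},
\]
replacing $f_1^2, f_\ell^2, f_\mu^2, f_{\mu\ell}^2$ by $f_2, f_{2\ell}, f_{2\mu}, f_{2\mu\ell}$ respectively (modulo $2$). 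The numerator and denominator then become identical, so the generating function is congruent to $1$ modulo $2$.

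Comparing coefficients of $q^n$ for $n \geq 1$ on both sides gives $\overline{R_{\ell,\mu}}(n) \equiv 0 \pmod{2}$, which is exactly \eqref{Tlu02}. There is no real obstacle here; the only subtlety is that the substitution is formally a congruence of power series with integer coefficients, which is justified because $f_1^2, f_{2\ell}, f_{2\mu}, f_{\mu\ell}^2$ all have constant term $1$ and are therefore units in $\mathbb{Z}[[q]]$, allowing the congruence of numerators and denominators to be combined into a congruence of quotients.
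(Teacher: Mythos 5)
Your proposal is correct and is essentially identical to the paper's proof: both apply \eqref{modp^k} with $p=2$, $k=1$ to replace each squared factor $f_m^2$ by $f_{2m}$ modulo $2$, after which the generating function collapses to $1$. Your extra remark about the factors being units in $\mathbb{Z}[[q]]$ is a sound (if implicit in the paper) justification of the manipulation.
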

\begin{proof} Thanks to \eqref{modp^k},
    \begin{align*}
        \sum_{n\geq 0}\overline{R_{\ell,\mu}}(n)=\frac{f_2f_\ell^2f_\mu^2f_{2\mu\ell}}{f_1^2f_{2l}f_{2\mu}f_{\mu\ell}^2}\equiv 1\pmod{2}.
    \end{align*} \end{proof}

\begin{lemma}\label{Gq=qGq}
   Let $k,s,t>0$ be integers and let $G(q)$ be a $q$-series. If $G(q)\equiv q^sG(q^t)\pmod{k}$, then $G(q)\equiv 0\pmod{k}$.
\end{lemma}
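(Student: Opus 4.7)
The plan is a simple minimum-coefficient (infinite descent) argument on the exponents of $G$. I would write $G(q)=\sum_{n\geq 0}a_n q^n$ and assume, for contradiction, that $G(q)\not\equiv 0\pmod k$. Then there exists a least index $n_0\geq 0$ with $a_{n_0}\not\equiv 0\pmod k$, and I would aim to derive a contradiction by comparing the coefficient of $q^{n_0}$ on the two sides of the hypothesis $G(q)\equiv q^s G(q^t)\pmod k$.

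The right-hand side $q^s G(q^t)=\sum_{n\geq 0}a_n q^{s+tn}$ supports only exponents of the form $s+tn$. If $n_0$ is not of this shape (in particular whenever $n_0<s$, or when $t\nmid n_0-s$), then the coefficient of $q^{n_0}$ on the right-hand side is $0$, which forces $a_{n_0}\equiv 0\pmod k$ and contradicts the choice of $n_0$. Otherwise $n_0=s+tm$ for a unique $m\geq 0$, and the hypothesis gives $a_m\equiv a_{n_0}\not\equiv 0\pmod k$. Since $s\geq 1$ and $t\geq 1$, we have $m=(n_0-s)/t<n_0$, contradicting the minimality of $n_0$.

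I do not anticipate any real obstacle; the whole argument is essentially bookkeeping. The only point worth flagging is the strict inequality $m<n_0$, which relies crucially on the hypothesis $s\geq 1$ — without it, the constant series $G(q)=1$ would already provide a counterexample, since $1\equiv 1\cdot G(q^t)\pmod k$ for every $t$.
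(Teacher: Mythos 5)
Your proof is correct and is essentially the paper's argument in contrapositive form: the paper proves $a_n\equiv 0\pmod{k}$ for all $n$ by strong induction (the right-hand side $\sum a_n q^{tn+s}$ supports only exponents strictly larger than the indices already handled), while you phrase the same coefficient comparison as a minimal-counterexample descent. Both hinge on the same key inequality $(n_0-s)/t<n_0$ coming from $s,t\geq 1$, so there is nothing further to add.
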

\begin{proof}
    We write
    \begin{equation*}
        G(q)=\sum_{n\geq0}a_nq^n.
    \end{equation*}
We use induction to show that $a_n\equiv0\pmod{k}$ for all $n$. Since $G(q)\equiv q^sG(q^t)\pmod{k}$, we have
\begin{equation}\label{A}
\sum_{n\geq0}a_nq^n\equiv\sum_{n\geq0}a_nq^{tn+s}\pmod{k}.
    \end{equation}
Clearly, $a_0\equiv0\pmod{k}$. Suppose $a_n\equiv0\pmod k$ for $0\leq n\leq m$. Then, from \eqref{A}, we get
\begin{equation*}
\sum_{n\geq m+1}a_nq^n\equiv\sum_{n\geq m+1}a_nq^{tn+s}\pmod{k},
    \end{equation*}
from which we can conclude that $a_{m+1}\equiv0\pmod{k}$.
\end{proof}

We conclude this section by citing some relevant results on $\overline{pp_0}(n)$, the number of overpartition pairs of n into odd parts. These results will appear again in Remark \ref{remark:ppo} below. 
The generating function for this partition function is given by
\begin{equation}\label{pponvarphi}
    \sum_{n\geq0}\overline{pp_o}(n)q^n=\frac{\varphi(q)}{\varphi(-q)}.
\end{equation}
\begin{lemma}\cite[Theorem 1.2]{Adiga-Dasappa}\label{ppo2^k}
    For all $n\geq 0$ and $k\geq4$, we have 
    \begin{align*}
        \overline{pp_o}(2^{k+1}n)&\equiv0\pmod{2^{3k+2}}, \quad n\geq1,\\
        \overline{pp_o}(2^k(4n+3))&\equiv0\pmod{2^{3k+1}},\\
        \overline{pp_o}(2^k(8n+5))&\equiv0\pmod{2^{3k}}.
    \end{align*}
\end{lemma}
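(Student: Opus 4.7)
The plan is to iterate a 2-dissection of the generating function $\sum_{n\geq 0}\overline{pp_o}(n)q^n = \varphi(q)/\varphi(-q) = f_2^6/(f_1^4 f_4^2)$, which follows from \eqref{f25/f12f42} and \eqref{sf12/f2}. Substituting \eqref{1/f14} gives the clean 2-dissection
\begin{equation*}
\frac{\varphi(q)}{\varphi(-q)} = \frac{f_4^{12}}{f_2^8 f_8^4} + 4q\,\frac{f_8^4}{f_2^4},
\end{equation*}
so that $\sum\overline{pp_o}(2n+1)q^n = 4 f_4^4/f_1^4$ and $\sum\overline{pp_o}(2n)q^n = f_2^{12}/(f_1^8 f_4^4)$. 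Squaring \eqref{1/f14} and resubstituting yields
\begin{equation*}
\sum\overline{pp_o}(4n)q^n = \frac{f_2^{24}}{f_1^{16}f_4^8} + 16q\,\frac{f_4^8}{f_1^8}, \qquad \sum\overline{pp_o}(4n+2)q^n = 8\,\frac{f_2^{12}}{f_1^{12}}.
\end{equation*}
Pushing the same procedure through $k=4$ should dispatch the base cases of all three families.

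The main step is to organize these identities into a joint induction on $k$. For each $k$, I would aim for a representation of the form
\begin{equation*}
\sum_{n\geq 0}\overline{pp_o}(2^k n)\, q^n \equiv 2^{\alpha_k} F_k(q) + 2^{\beta_k} q\, G_k(q) \pmod{2^{\gamma_k}},
\end{equation*}
with $F_k,G_k$ eta quotients in $f_1,f_2,f_4,\ldots$ and the second summand tracking the residue class $2^k(2m+1)$. A further 2-dissection of the ``even'' piece isolates $2^k(4m+3)$, and one more separates $2^k(8m+5)$. Each pass through \eqref{1/f14} (or its iterated squares) should deposit exactly three fresh powers of $2$ into the relevant odd-residue components after the cross terms are tallied, which accounts for the jumps $3k+2$, $3k+1$, and $3k$ in the stated congruences and drives the induction step $k\to k+1$.

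The main obstacle is the bookkeeping of cross terms when repeatedly applying \eqref{1/f14} and its higher powers: one must verify that no cancellation destroys a power of $2$ and that the eta quotient in the even component remains of a shape to which the dissection can be reapplied. An appealing alternative exploits \eqref{1/phi-q} to write
\begin{equation*}
\frac{\varphi(q)}{\varphi(-q)} = \varphi(q)^2\,\varphi(q^2)^2\,\varphi(q^4)^4\,\varphi(q^8)^8\cdots,
\end{equation*}
together with the easy inductive consequence of \eqref{phiq^418} that $\varphi(q)^{2^j} \equiv 1\pmod{2^{j+1}}$ for $j\geq 2$. Selecting a nonconstant contribution from the factor of index $j$ costs at least $2^{j+1}$, so producing a term of degree $2^{k+1}n$ forces sufficiently many high-index factors to be nontrivial, stacking powers of $2$ in a controlled way. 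This recasts the problem as a valuation estimate on convolutions of theta coefficients and, I expect, yields all three families in a single unified argument without the heaviest of the eta-quotient gymnastics.
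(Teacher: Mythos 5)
The paper does not actually prove this lemma: it is quoted verbatim from Adiga and Dasappa \cite[Theorem 1.2]{Adiga-Dasappa} and used as a black box in Remark \ref{remark:ppo}, so there is no in-paper argument to compare against. Judged on its own, your proposal is a plan rather than a proof, and the plan has a genuine gap at its center. Your opening dissections are correct: $\varphi(q)/\varphi(-q)=f_2^6/(f_1^4f_4^2)$, and applying \eqref{1/f14} does give $\sum\overline{pp_o}(2n+1)q^n=4f_4^4/f_1^4$, $\sum\overline{pp_o}(4n+2)q^n=8f_2^{12}/f_1^{12}$, etc. But notice what this data says: the gain from the odd class to the $4n+2$ class is a \emph{single} power of $2$ ($4$ to $8$), not three. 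The assertion that ``each pass through \eqref{1/f14} should deposit exactly three fresh powers of $2$'' is therefore not a generic feature of the dissection; it is a special phenomenon that only sets in for $k\geq 4$, and establishing it is precisely the content of the theorem. The inductive step you describe, together with the base case at $k=4$ (which already requires reaching moduli $2^{14}$, $2^{13}$, $2^{12}$), is exactly the part left unproved, and nothing in the proposal indicates how the cross terms conspire to produce the extra powers of $2$.

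The ``alternative'' argument via $\varphi(q)/\varphi(-q)=\varphi(q)^2\varphi(q^2)^2\varphi(q^4)^4\cdots$ is worse: it cannot work even in principle as a termwise valuation bound. Writing the factor of index $j$ as $1+2^{j+1}A_j(q^{2^j})$, the single factor $j=k+1$ by itself contributes a term of degree $2^{k+1}$ whose coefficient is divisible only by $2^{k+2}$ (e.g.\ the coefficient of $q^{2^{k+1}}$ in $\varphi(q^{2^{k+1}})^{2^{k+1}}$ is $2^{k+2}+\cdots$), and low-index factors such as $\varphi(q)^2$ contribute to the coefficient of $q^{2^{k+1}n}$ with $2$-adic valuation as small as $2$. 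Since the theorem asserts divisibility by $2^{3k+2}$, the result must come from massive cancellation among these contributions, which a ``cost at least $2^{j+1}$ per selected factor'' estimate is structurally incapable of detecting. If you want an actual proof, you need the explicit inductive generating-function identities of Adiga and Dasappa (or an equivalent modular-forms argument), not a convolution valuation bound.
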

With the tools mentioned above, we are now ready to establish our main findings.

\section{Proof of Theorem \ref{T23} and Corollary \ref{C23}}
\subsection*{Proof of Theorem \ref{T23}}
We have 
\begin{align*}
    \sum_{n\geq0}\overline{R_{2,3}}(n)q^n&=\frac{f_2^3f_3^2f_{12}}{f_1^2f_4f_6^3}\\
    &=\frac{f_2^3f_3^2f_{12}}{f_1^3f_6^3}\left(\frac{f_1}{f_4}\right)\\
    &\equiv\frac{f_6f_3^2f_{12}}{f_3f_6^3}\left(\frac{f_1}{f_4}\right) \pmod{3} \quad (\text{thanks to} \,\ \eqref{modp^k})\\
    &=\frac{f_3f_{12}}{f_6^2}\left(\frac{f_6f_9f_{18}}{f_{12}^3}-q\frac{f_3f_{18}^4}{f_9^2f_{12}^3}-q^2\frac{f_6^2f_9f_{36}^3}{f_{12}^4f_{18}^2}\right) \quad(\text{thanks to}\,\ \eqref{f1/f4}).
\end{align*}
We now extract the terms in which the exponents of $q$ are of the form $3n$ and replace $q^3$ by $q$ to obtain
\begin{align}
   \notag \sum_{n\geq0}\overline{R_{2,3}}(3n)q^n&\equiv\frac{f_1f_{4}}{f_2^2}\left(\frac{f_2f_3f_6}{f_4^3}\right)\pmod{3}\\
    \notag&=\frac{f_1f_4f_6^2}{f_2f_3f_4^3}\left(\frac{f_3^2}{f_6}\right)\\
   \notag &\equiv \frac{f_1f_4f_2^6}{f_2f_1^3f_{4}^3}\left(\frac{f_3^2}{f_6}\right) \pmod{3} \quad (\text{thanks to} \,\ \eqref{modp^k})\\
  \notag &=\frac{f_2^5}{f_1^2f_4^2}\left(\frac{f_3^2}{f_6}\right)\\
 \label{R233n} &= \varphi(q)\frac{f_3^2}{f_6} \quad (\text{thanks to } \eqref{f25/f12f42})\\
\label{R233nA}&=\left(\varphi(q^9)+qH(q^3)\right)\left(\frac{f_{27}^2}{f_{54}}-2q^3\frac{f_9f_{54}^2}{f_{18}f_{27}}\right),
\end{align}
for some function $H$, where the last equality follows by using \eqref{f12/f2} and observing that $n^2\not\equiv2\pmod{3}$ and $3|n^2$ if and only if $9|n^2$. Thus, collecting the terms in which the exponents of $q$ are the form $9n$ gives
\begin{align*}
     \sum_{n\geq0}\overline{R_{2,3}}(27n)q^{9n}&\equiv\varphi(q^9)\frac{f_{27}^2}{f_{54}}\pmod{3}.  
\end{align*}
We replace $q^9$ by $q$ to get
\begin{equation}
   \label{R2327n}  \sum_{n\geq0}\overline{R_{2,3}}(27n)q^{n}\equiv\varphi(q)\frac{f_{3}^2}{f_{6}}\pmod{3}. 
\end{equation}
The congruences \eqref{R233n} and \eqref{R2327n} complete the proof. \qed
\subsection*{Proof of Corollary \ref{C23}}
Thanks to \eqref{Tlu02}, it
suffices to show that $\overline{R_{2,3}}\left(3^\beta(3n+2)\right)\equiv0\pmod{3}$.   For this, we show, by induction on $\alpha$, that for all $\alpha\geq0$, we have 
 \begin{align}
 \label{C231}\overline{R_{2,3}}\left(3^{2\alpha+1}(3n+2)\right)&\equiv0\pmod{3},\\
 \label{C232}\overline{R_{2,3}}\left(3^{2\alpha+2}(3n+2)\right)&\equiv0\pmod{3}.
 \end{align}
 As the right-hand side of \eqref{R233nA} contains no terms in which the exponents of $q$ are of the form $3n+2$ or $9n+6$, we conclude that  \begin{align*}\overline{R_{2,3}}\left(3(3n+2)\right)&=\overline{R_{2,3}}(9n+6)\equiv0\pmod{3},\\
 \overline{R_{2,3}}\left(3(9n+6)\right)&=\overline{R_{2,3}}(27n+18)\equiv0\pmod{3},
 \end{align*}
    which proves the base case $\alpha=0$ for \eqref{C231} and \eqref{C232}, respectively. We now assume that 
$\overline{R_{2,3}}\left(3^{2\alpha+1}(3n+2)\right)\equiv0\pmod{3}$ for some $\alpha\geq0$. Then,
\begin{align*}
    \overline{R_{2,3}}\left(3^{2(\alpha+1)+1}(3n+2)\right)&=\overline{R_{2,3}}\left(27 
   \left(3^{2\alpha}(3n+2)\right)\right)\\
    &\equiv \overline{R_{2,3}}\left(3 
   \left(3^{2\alpha}(3n+2)\right)\right) \pmod{3} \quad (\text{by Theorem \ref{T23}})\\
   &=\overline{R_{2,3}}
   \left(3^{2\alpha+1}(3n+2)\right)\\
   &\equiv0\pmod{3}.
\end{align*}
This completes the induction argument for \eqref{C231}.

Similarly, if $\overline{R_{2,3}}\left(3^{2\alpha+2}(3n+2)\right)\equiv0\pmod{3}$ for some $\alpha\geq0$, then we get 
\begin{align*}
    \overline{R_{2,3}}\left(3^{2(\alpha+1)+2}(3n+2)\right)&\equiv \overline{R_{2,3}}\left(3 
   \left(3^{2\alpha+1}(3n+2)\right)\right) \pmod{3}\\
   &\equiv0\pmod{3},
\end{align*}
completing the induction argument for \eqref{C232}.\qed
\section{proof of theorem \ref{T432n}}
We start with the generating function for $\overline{R_{4,3}}(n)$,
\begin{align*}
\sum_{n\geq 0}\overline{R_{4,3}}(n)q^n&=\frac{f_2f_4^2f_3^2f_{24}}{f_1^2f_{8}f_{6}f_{12}^2}\\
&=\left(\frac{f_4^4f_6f_{12}^2}{f_2^5f_8f_{24}}+2q\frac{f_4f_6^2f_8f_{24}}{f_2^4f_{12}}\right)\frac{f_2f_4^2f_{24}}{f_{8}f_{6}f_{12}^2} \quad \text{(thanks to \eqref{f32/f12})}.
\end{align*}
Extracting the terms that contain even powers of $q$ and replacing $q^2$ by $q$, we get
\begin{align}
 \label{432nvarphi} \sum_{n\geq 0}\overline{R_{4,3}}(2n)q^n&=\frac{f_2^6}{f_1^4f_4^2}=\frac{f_2^5}{f_1^2f_4^2}\left(\frac{f_2}{f_1^2}\right)=\frac{\varphi(q)}{\varphi(-q)} \quad \text{(thanks to \eqref{f25/f12f42}, \eqref{sf12/f2})}.
 \end{align}
Using \eqref{phiq^418} and \eqref{1/phi-q} in \eqref{432nvarphi}, we get
 \begin{align*}
  \sum_{n\geq 0}\overline{R_{4,3}}(2n)q^n&\equiv \varphi(q)^2\varphi(q^2)^2\pmod{8} \quad\\
&=\Big(1+2\sum_{n\geq1}q^{n^2}\Big)^2\Big(1+2\sum_{n\geq1}q^{2n^2}\Big)^2\\
&=\Big(1+4\sum_{n\geq1}q^{n^2}+4\sum_{m,n\geq1}q^{m^2+n^2}\Big)\\
&\quad\times\Big(1+4\sum_{n\geq1}q^{2n^2}+4\sum_{m,n\geq1}q^{2m^2+2n^2}\Big)\\
   &\equiv 1+4\sum_{n\geq1}q^{n^2}+4\sum_{m,n\geq1}q^{m^2+n^2}\\
   &\quad+4\sum_{n\geq1}q^{2n^2}+4\sum_{m,n\geq1}q^{2m^2+2n^2}\pmod{8}.\end{align*}
Observe that 
$$
\sum_{m,n\geq1}q^{m^2+n^2}=\sum_{n\geq1}q^{2n^2}+\sum_{\substack{n\neq m\\m,n\geq1}}q^{m^2+n^2}=\sum_{n\geq1}q^{2n^2}+2\sum_{m>n\geq1}q^{m^2+n^2},$$
and, similarly,
$$\sum_{m,n\geq1}q^{2m^2+2n^2}= \sum_{n\geq1}q^{4n^2}+2\sum_{m>n\geq1}q^{2m^2+2n^2}.
$$
Putting it all together gives
\begin{align*}
   \sum_{n\geq 0}\overline{R_{4,3}}(2n)q^n&\equiv1+ 4\sum_{n\geq1}q^{n^2}+4\sum_{n\geq1}q^{4n^2} \pmod{8}\\
    &=1+ 4\sum_{\text{odd }n\geq1}q^{n^2}+ 4\sum_{n\geq1}q^{4n^2}+4\sum_{n\geq1}q^{4n^2}\\
   &\equiv1+ 4\sum_{\text{odd }n\geq1}q^{n^2}\pmod{8}.
\end{align*}
The congruences \eqref{432n04} and \eqref{432n08} follow immediately. \qed
\begin{remark}
\label{remark:ppo}
Thanks to \eqref{pponvarphi} and \eqref{432nvarphi}, we have 
\begin{equation*}
    \sum_{n\geq 0}\overline{R_{4,3}}(2n)q^n=\sum_{n\geq 0}\overline{pp_o}(n)q^n,
\end{equation*}
that is, 
for all $n\geq 0$,
$$\overline{R_{4,3}}(2n)=\overline{pp_o}(n).$$ 
It follows that $\overline{R_{4,3}}(2n)$ inherits numerous arithmetic properties given the congruences which are already known for $\overline{pp_o}(n)$. 
In particular, from Lemma \ref{ppo2^k}, for all $n\geq 0$ and $k\geq4$, we have 
    \begin{align*}
        \overline{R_{4,3}}(2^{k+2}n)&\equiv0\pmod{2^{3k+2}}, \quad n\geq1,\\
        \overline{R_{4,3}}(2^{k+1}(4n+3))&\equiv0\pmod{2^{3k+1}},\\
        \overline{R_{4,3}}(2^{k+1}(8n+5))&\equiv0\pmod{2^{3k}}.
    \end{align*}
Additional arithmetic properties satisfied by $\overline{pp_o}(n)$ can be found in \cite{Lin}.
\end{remark}

\section{Proof of Theorem \ref{T494n012} and Theorem \ref{T493n08}}
\subsection*{Proof of Theorem \ref{T494n012}} It follows from \eqref{49n04} that, for all $n\geq 1$, $\overline{R_{4,9}}(4n)\equiv0\pmod{4}$. So, we only need to show that, for all $n\geq 1$,  $\overline{R_{4,9}}(4n)\equiv0\pmod{3}$. For this, we begin with the generating function
\begin{align}
   \label{49G} \sum_{n\geq 0}\overline{R_{4,9}}(n)q^n&=\frac{f_2f_4^2f_9^2f_{72}}{f_1^2f_{8}f_{18}f_{36}^2}\\
    \notag&\equiv \left(\frac{f_3^3}{f_1}\right)^2\frac{f_2f_4^2f_{72}}{f_8f_{18}f_{36}^2}\pmod{3} \quad \text{(thanks to \eqref{modp^k})}\\
    \notag&= \left(\frac{f_4^6f_6^4}{f_2^4f_{12}^2}+2q\frac{f_4^2f_6^2f_{12}^2}{f_2^2}+q^2\frac{f_{12}^6}{f_4^2}\right)\frac{f_2f_4^2f_{72}}{f_8f_{18}f_{36}^2} \quad \text{(using \eqref{f33/f1})}.
\end{align}
Extracting the terms containing even powers of $q$ and replacing $q^2$ by $q$ gives
\begin{align*}
    \sum_{n\geq0} \overline{R_{4,9}}(2n)q^{n}&\equiv\left(\frac{f_2^6f_3^4}{f_1^4f_{6}^2}+q\frac{f_{6}^6}{f_2^2}\right)\frac{f_1f_2^2f_{36}}{f_4f_9f_{18}^2} \pmod{3}\\
    &=\frac{f_2^8f_3^4f_{36}}{f_1^3f_4f_6^2f_9f_{18}^2} + q\frac{f_1f_6^6f_{36}}{f_4f_9f_{18}^2}\\
    &\equiv\left(\frac{f_3}{f_1^3}\right)\frac{f_2^8f_{36}}{f_4f_6^2f_{18}^2}+q\left(\frac{f_1}{f_3^3}\right)\frac{f_6^6f_{36}}{f_4f_{18}^2} \pmod{3} \quad \text{(thanks to \eqref{modp^k})}\\
    &= \left(\frac{f_4^6f_6^3}{f_2^9f_{12}^2}+3q\frac{f_4^2f_6f_{12}^2}{f_2^7}\right)\frac{f_2^8f_{36}}{f_4f_6^2f_{18}^2}+q\left(\frac{f_2f_4^2f_{12}^2}{f_6^7}-q\frac{f_2^3f_{12}^6}{f_4^2f_6^9}\right) \frac{f_6^6f_{36}}{f_4f_{18}^2},
\end{align*}
where the last equality follows using \eqref{f3/f13} and \eqref{f1/f33}. Again, extracting the terms that contain even powers of $q$ and replacing $q^2$ by $q$, we get 
\begin{align*}
     \sum_{n\geq0} \overline{R_{4,9}}(4n)q^{n}&\equiv\frac{f_2^5f_3f_{18}}{f_1f_6^2f_9^2}-q\frac{f_1^3f_6^6f_{18}}{f_2^3f_3^3f_9^2} \pmod{3}\\
     &\equiv \frac{f_2^8}{f_1^{16}}-q\frac{f_2^{24}}{f_1^{24}}\pmod{3}   \quad \text{(thanks to \eqref{modp^k})}.
\end{align*}
In order to complete the proof, it suffices to show that 
\begin{equation*}
    \frac{f_2^8}{f_1^{16}}-q\frac{f_2^{24}}{f_1^{24}}\equiv1\pmod{3}.
\end{equation*}
Equivalently, we show $G(q):=f_1^8f_2^8-qf_2^{24}-f_1^{24}\equiv0\pmod{3}$. For this, we rewrite the series $G$ as 
\begin{align*}
   G(q)&=\frac{f_1^9}{f_1}f_2^8-qf_2^{24}-\frac{f_1^{27}}{f_1^3} \\
   &\equiv\frac{f_3^3}{f_1}f_2^8-qf_2^{24}-\frac{f_9^3}{f_3}\pmod{3} \quad \text{(thanks to \eqref{modp^k})}\\
  &=\left(\frac{f_4^3f_6^2}{f_2^2f_{12}}+q\frac{f_{12}^3}{f_4}\right)f_2^8-qf_2^{24}-\left(\frac{f_{12}^3f_{18}^2}{f_{6}^2f_{36}}+q^3\frac{f_{36}^3}{f_{12}}\right) \quad \text{(using \eqref{f33/f1})}\\
  &=\frac{f_4^3f_6^2f_2^6}{f_{12}}+q\frac{f_2^8f_{12}^3}{f_4}-qf_2^{24}-\frac{f_{12}^3f_{18}^2}{f_{6}^2f_{36}}-q^3\frac{f_{36}^3}{f_{12}}\\
  &\equiv f_2^{12}+qf_2^8f_4^8-qf_2^{24}-f_2^{12}-q^3f_4^{24} \pmod{3}\\
  &=q(f_2^8f_4^8-q^2f_4^{24}-f_2^{24})\\
  &=qG(q^2).
\end{align*}
Lemma \ref{Gq=qGq} completes the proof. \qed
\subsection*{Proof of Theorem \ref{T493n08}}

Applying \eqref{f2/f12} and \eqref{f12/f2}
to \eqref{49G}, we obtain
\begin{align}\label{49n}
\sum_{n\ge 0}\overline{R_{4,9}}(n)q^{n} 
	= \left(\frac{f_6^4 f_9^6}{f_3^8 f_{18}^3} + 2q\, \frac{f_6^3 f_9^3}{f_3^7} + 4q^2\, \frac{f_6^2 f_{18}^3}{f_3^6}\right)
	\left(\frac{f_{36}^2}{f_{72}} - 2q^4\, \frac{f_{12} f_{72}^2}{f_{24} f_{36}}\right)
	\frac{f_9^2 f_{72}}{f_{18} f_{36}^2}.
\end{align}
Extracting the terms in which the exponents of $q$ are of the form $3n$, we get 
\begin{align*}
    \sum_{n\geq0}\overline{R_{4,9}}(3n)q^{3n}=& \frac{f_6^4f_9^8}{f_3^8f_{18}^4} - 8q^6 \frac{f_6^2 f_9^2 f_{12} f_{18}^2 f_{72}^3}{f_3^6 f_{24} f_{36}^3}.
\end{align*}
We now replace $q^3$ by $q$ to obtain
\begin{align}
   \label{493n} \sum_{n\geq0}\overline{R_{4,9}}(3n)q^{n}&=\frac{f_2^4 f_3^8}{f_1^8 f_6^4} - 8q^2 \frac{f_2^2 f_3^2 f_4 f_6^2 f_{24}^3}{f_1^6 f_8 f_{12}^3}\\
  \notag & \equiv\frac{f_2^4f_3^8}{f_1^8f_{6}^4}\pmod{8}\\
   \notag &\equiv\frac{f_1^8f_3^8}{f_1^8f_3^8}\pmod{8} \quad \text{(thanks to \eqref{modp^k})}\\
   \notag &=1.
\end{align}
This completes the proof.\qed

\section{proof of theorem \ref{T494knr}}
It follows from \eqref{494n012} that, for all $n\geq 0$,$\overline{R_{4,9}}(4(kn+r))\equiv0\pmod{3}$. Thus, we only need to show that, for all $n\geq 0$, $\overline{R_{4,9}}(4(kn+r))\equiv0\pmod{8}$. We have,
\begin{align*} 
\sum_{n\geq 0}\overline{R_{4,9}}(n)q^n&=\frac{f_2f_4^2f_9^2f_{72}}{f_1^2f_{8}f_{18}f_{36}^2} \\
&=\frac{\varphi(-q^4)\varphi(-q^9)}{\varphi(-q)\varphi(-q^{36})} \quad \text{(thanks to \eqref{sf12/f2})}\\
&\equiv \frac{\varphi(q)\varphi(q^2)^2\varphi(q^{36})\varphi(q^{72})^2}{\varphi(q^4)\varphi(q^8)^2\varphi(q^9)\varphi(q^{18})^2}\pmod{8} \quad \text{(thanks to \eqref{phiq^418} and \eqref{1/phi-q})}\\
&\equiv \varphi(q)\varphi(q^2)^2\varphi(q^{36})\varphi(q^{72})^2 \varphi(-q^4)\varphi(-q^9)\pmod{8} \quad \text{(thanks to \eqref{phiq})}\\
&=\Big(1+2\sum_{n\geq1}q^{n^2}\Big)\Big(1+2\sum_{n\geq1}q^{2n^2}\Big)^2\Big(1+2\sum_{n\geq1}q^{36n^2}\Big)\Big(1+2\sum_{n\geq1}q^{72n^2}\Big)^2\\
&\quad \times \Big(1+2\sum_{n\geq1}(-1)^nq^{4n^2}\Big)\Big(1+2\sum_{n\geq1}(-1)^nq^{9n^2}\Big)\\
&=\Big(1+2\sum_{n\geq1}q^{n^2}\Big)\Big(1+4\sum_{n\geq1}q^{2n^2}+4\sum_{m,n\geq1}q^{2m^2+2n^2}\Big) \\
&\quad \times \Big(1+2\sum_{n\geq1}q^{36n^2}\Big)
\Big(1+4\sum_{n\geq1}q^{72n^2}+4\sum_{m,n\geq1}q^{72m^2+72n^2}\Big)\\
&\quad \times \Big(1+2\sum_{n\geq1}(-1)^nq^{4n^2}\Big)\Big(1+2\sum_{n\geq1}(-1)^nq^{9n^2}\Big)\\
\end{align*}
By further expanding the right-hand side of the above congruence and reducing it modulo 8, we get
\begin{align}\label{49nvarphi}
\sum_{n\geq 0}\overline{R_{4,9}}(n)q^n
&\equiv1+2\sum_{n\geq1}q^{n^2}+4\sum_{n\geq1}q^{2n^2}+2\sum_{n\geq1}q^{36n^2}+4\sum_{n\geq1}q^{72n^2}\\
\notag&\quad +2\sum_{n\geq1}(-1)^nq^{4n^2}
 +2\sum_{n\geq1}(-1)^nq^{9n^2}+4\sum_{m,n\geq1}q^{2m^2+2n^2}\\
\notag&\quad+4\sum_{m,n\geq1}q^{72m^2+72n^2}+4\sum_{m,n\geq1}q^{m^2+36n^2}+4\sum_{m,n\geq1}q^{9m^2+36n^2}\\
\notag&\quad+4\sum_{m,n\geq1}q^{m^2+9n^2}+4\sum_{m,n\geq1}q^{4m^2+9n^2}+4\sum_{m,n\geq1}q^{m^2+4n^2}\\
\notag&\quad +4\sum_{m,n\geq1}q^{4m^2+36n^2} \pmod{8}.
\end{align}

We now observe the following.
\begin{align}
  \label{B1} \sum_{n\geq1}q^{n^2}&=  \sum_{n\geq1}q^{4n^2}+ \sum_{\text{odd } n\geq1}q^{n^2},\\
\sum_{n\geq1}q^{2n^2}&=\sum_{n\geq1}q^{8n^2}+\sum_{\text{odd }n\geq1}q^{2n^2}\\
   \label{B3}\sum_{n\geq1}(-1)^nq^{9n^2}&=\sum_{n\geq1}(-1)^nq^{36n^2}+\sum_{\text{odd }n\geq1}(-1)^nq^{9n^2},\\
\sum_{m,n\geq1}q^{2m^2+2n^2}&=\sum_{n\geq1}q^{4n^2}+2\sum_{m>n\geq1}q^{2m^2+2n^2},\\
\sum_{m,n\geq1}q^{72m^2+72n^2}&=\sum_{n\geq1}q^{144n^2}+2\sum_{m>n\geq1}q^{72m^2+72n^2},\\
\sum_{m,n\geq1}q^{m^2+36n^2}&=\sum_{m,n\geq1}q^{4m^2+36n^2}+\sum_{\substack{m,n\geq1\\ m \text{ odd}}}q^{m^2+36n^2}\\
\sum_{m,n\geq1}q^{m^2+9n^2}&=\sum_{m,n\geq1}q^{4m^2+9n^2}+\sum_{\substack{m,n\geq1\\m\text{ odd}}}q^{m^2+9n^2}\\
\notag&=\sum_{m,n\geq1}q^{4m^2+9n^2}+\sum_{\substack{m,n\geq1\\m,n \text{ odd}}}q^{m^2+(3n)^2}+\sum_{\substack{m,n\geq1\\m \text{ odd}}}q^{m^2+36n^2},\\
\sum_{m,n\geq1}q^{m^2+4n^2}&=\sum_{m,n\geq1}q^{4m^2+4n^2}+\sum_{\substack{m,n\geq1\\m \text{ odd}}}q^{m^2+4n^2}\\
\notag &=\sum_{n\geq1}q^{8n^2}+2\sum_{m>n\geq1}q^{4m^2+4n^2}+\sum_{\substack{m,n\geq1\\m \text{ odd}}}q^{m^2+4n^2},\\
\label{B2}\sum_{m,n\geq1}q^{9m^2+36n^2}&=\sum_{m,n\geq1}q^{36m^2+36n^2}+\sum_{\substack{m,n\geq1\\m \text{ odd}}}q^{9m^2+36n^2}\\
\notag&=\sum_{n\geq1}q^{72n^2}+2\sum_{m>n\geq1}q^{36m^2+36n^2}+\sum_{\substack{m,n\geq1\\m \text{ odd}}}q^{9m^2+36n^2}.
\end{align}
Using \eqref{B1}--\eqref{B2} in \eqref{49nvarphi}, reducing modulo 8, and then extracting the terms in which the exponents of $q$ are of the form $4n$, we get
\begin{align*}
    \sum_{n\geq0}\overline{R_{4,9}}(4n)q^{4n}&\equiv6\sum_{n\geq1}q^{4n^2}+2\sum_{n\geq1}q^{36n^2}+2\sum_{n\geq1}(-1)^nq^{4n^2}+2\sum_{n\geq1}(-1)^nq^{36n^2}\\
    &\quad+4\sum_{n\geq1}q^{144n^2} \pmod{8}.
\end{align*}
Replacing $q^4$ by $q$ yields 
\begin{align*}
    \sum_{n\geq0}\overline{R_{4,9}}(4n)q^{n}&\equiv6\sum_{n\geq1}q^{n^2}+2\sum_{n\geq1}q^{(3n)^2}+2\sum_{n\geq1}(-1)^nq^{n^2}+2\sum_{n\geq1}(-1)^nq^{(3n)^2}\\
    &\quad +4\sum_{n\geq1}q^{(6n)^2} \pmod{8}.
\end{align*}
Notice that every exponent of $q$ on the right-hand side sum is a square number, and if $m^2=kn+r$, then $m^2\equiv r\pmod{k}$. But $r$ is a quadratic nonresidue modulo $k$. Hence,
$$\sum_{n\geq0}\overline{R_{4,9}}(4(kn+r))q^{kn+r}\equiv0\pmod{8}.$$
This proves the congruence \eqref{494knr08}.

In order to prove \eqref{49n04}, we reduce \eqref{49nvarphi} modulo 4 and obtain
\begin{align*}
\sum_{n\geq 0}\overline{R_{4,9}}(n)q^n
&\equiv1+2\sum_{n\geq1}q^{n^2}+2\sum_{n\geq1}q^{36n^2}+2\sum_{n\geq1}q^{4n^2}+2\sum_{n\geq1}q^{9n^2} \pmod{4}\\
&\equiv1+2\sum_{\text{odd }n\geq1}q^{n^2}+2\sum_{\text{odd }n\geq1}q^{9n^2}\pmod{4},
\end{align*}
where the last congruence follows using \eqref{B1} and \eqref{B3}.
This completes the proof.\qed
\section{Proof of Theorem \ref{T49}}
\subsection*{Proof of $\eqref{4918n1296}$}
From \eqref{493n},
\begin{align*}
\sum_{n \ge 0} \overline{R_{4,9}}(3n) q^n 
		&= \frac{f_2^4 f_3^8}{f_1^8 f_6^4} - 8q^2 \frac{f_2^2 f_3^2 f_4 f_6^2 f_{24}^3}{f_1^6 f_8 f_{12}^3} \\
		&\equiv \left( \frac{f_2}{f_1^2} \right)^4 \frac{f_3^8}{f_6^4} - 8q^2 \frac{(f_1 f_2)^2}{f_4 f_8} \frac{f_3^2 f_6^2 f_{24}^3}{f_{12}^3} \pmod{32} \quad \text{(by \eqref{modp^k})} \\
		&= \left( \frac{f_6^4 f_9^6}{f_3^8 f_{18}^3} + 2q \frac{f_6^3 f_9^3}{f_3^7} + 4q^2 \frac{f_6^2 f_{18}^3}{f_3^6} \right)^4 \frac{f_3^8}{f_6^4} \\
		&\quad - 8q^2 \left( \frac{f_6 f_9^4}{f_3 f_{18}^2} - q f_9 f_{18} - 2q^2 \frac{f_3 f_{18}^4}{f_6 f_9^2} \right)^2 \\
		&\quad \times \left( \frac{f_{36}^9}{f_{12}^6 f_{24}^2 f_{72}^3} + q^4 \frac{f_{36}^6}{f_{12}^5 f_{24}^3} + 3q^8 \frac{f_{36}^3 f_{72}^3}{f_{12}^4 f_{24}^4} \right. \\
		&\quad \left. - 2q^{12} \frac{f_{72}^6}{f_{12}^3 f_{24}^5} + 4q^{16} \frac{f_{72}^9}{f_{12}^2 f_{24}^6 f_{36}^3} \right) \frac{f_3^2 f_6^2 f_{24}^3}{f_{12}^3}.
\end{align*}
The last equality is derived using \eqref{f2/f12}, \eqref{f1f2}, and \eqref{1/f1f2}. Extracting the terms where the exponents of $q$ are congruent to 1 modulo 3 gives
\begin{equation*}
	\begin{aligned}
		\sum_{n \ge 0} \overline{R_{4,9}}(9n+3) q^{3n+1} 
		&\equiv 8q \frac{f_6^{11} f_9^{21}}{f_3^{23} f_{18}^9} - 8q^4 \frac{f_3^2 f_6^2 f_9^2 f_{18}^2 f_{24} f_{36}^9}{f_{12}^9 f_{72}^3} + 16q^4 \frac{f_6^8 f_9^{12}}{f_3^{20}} \\
		&\quad + 16q^7 \frac{f_3 f_6^3 f_9^5 f_{36}^6}{f_{12}^8 f_{18}} + 8q^{10} \frac{f_6^4 f_9^8 f_{36}^3 f_{72}^3}{f_{12}^7 f_{18}^4 f_{24}} \\
		&\quad + 16q^{16} \frac{f_3^2 f_6^2 f_9^2 f_{18}^2 f_{72}^6}{f_{12}^6 f_{24}^2} \pmod{32}.
	\end{aligned}
\end{equation*}
After dividing the series by $q$ and replacing $q^3$ by $q$, the expression transforms into
\begin{align*}
		\sum_{n \ge 0} \overline{R_{4,9}}(9n+3) q^n 
		&\equiv 8 \frac{f_2^{11} f_3^{21}}{f_1^{23} f_{6}^9} - 8q \frac{f_1^2 f_2^2 f_3^2 f_6^2 f_8 f_{12}^9}{f_4^9 f_{24}^3} + 16q \frac{f_2^{8} f_3^{12}}{f_1^{20}} \\
		&\quad + 16q^2 \frac{f_1 f_2^3 f_3^5 f_{12}^6}{f_4^8 f_6} + 8q^3 \frac{f_2^4 f_3^8 f_{12}^3 f_{24}^3}{f_4^7 f_6^4 f_8} \\
		&\quad + 16q^5 \frac{f_1^2 f_2^2 f_3^2 f_6^2 f_{24}^6}{f_4^6 f_8^2} \pmod{32} \\
		&\equiv 8 f_1 f_3 \frac{f_6}{f_2} - 8q (f_1 f_3)^2 \frac{f_2^2 f_6^2 f_8 f_{12}^9}{f_4^9 f_{24}^3} + 16q \frac{f_6^6}{f_2^2} \\
		&\quad + 16q^2 f_1 f_3 \frac{f_2^3 f_6 f_{12}^6}{f_4^8} + 8q^3 \frac{f_2^4 f_{12}^3 f_{24}^3}{f_4^7 f_8} + 16q^5 \frac{f_2^3 f_6^3 f_{24}^6}{f_4^6 f_8^2} \pmod{32} \\
		&= 8 \left( \frac{f_2 f_8^2 f_{12}^4}{f_4^2 f_6 f_{24}^2} - q \frac{f_4^4 f_6 f_{24}^2}{f_2 f_8^2 f_{12}^2} \right) \frac{f_6}{f_2} \\
		&\quad - 8q \left( \frac{f_2 f_8^2 f_{12}^4}{f_4^2 f_6 f_{24}^2} - q \frac{f_4^4 f_6 f_{24}^2}{f_2 f_8^2 f_{12}^2} \right)^2 \frac{f_2^2 f_6^2 f_8 f_{12}^9}{f_4^9 f_{24}^3} \\
		&\quad + 16q \frac{f_6^6}{f_2^2} + 16q^2 \left( \frac{f_2 f_8^2 f_{12}^4}{f_4^2 f_6 f_{24}^2} - q \frac{f_4^4 f_6 f_{24}^2}{f_2 f_8^2 f_{12}^2} \right) \frac{f_2^3 f_6 f_{12}^6}{f_4^8} \\
		&\quad + 8q^3 \frac{f_2^4 f_{12}^3 f_{24}^3}{f_4^7 f_8} + 16q^5 \frac{f_2^3 f_6^3 f_{24}^6}{f_4^6 f_8^2} \quad \text{(by \eqref{f1f2})}
\end{align*}
By isolating the terms that contain odd powers of $q$, we get
\begin{align*}
		\sum_{n \ge 0} \overline{R_{4,9}}(18n+12) q^{2n+1} 
		&\equiv -8q \frac{f_4^4 f_6^2 f_{24}^2}{f_2^2 f_8^2 f_{12}^2} - 8q \frac{f_2^4 f_8^5 f_{12}^{17}}{f_4^{13} f_{24}^7} - 8q^3 \frac{f_6^4 f_{12}^5 f_{24}}{f_4 f_8^3} \\
		&\quad + 16q \frac{f_6^6}{f_2^2} - 16q^3 \frac{f_2^2 f_6^2 f_{12}^4 f_{24}^2}{f_4^4 f_8^2} + 8q^3 \frac{f_2^4 f_{12}^3 f_{24}^3}{f_4^7 f_8} \\
		&\quad + 16q^5 \frac{f_2^3 f_6^3 f_{24}^6}{f_4^6 f_8^2} \pmod{32} \\
		&\equiv -8q \frac{f_6^6}{f_2^2} - 8q \frac{f_4 f_{12} f_{24}}{f_8} - 8q^3 \frac{f_6^4 f_{12}^5 f_{24}}{f_4 f_8^3} + 16q \frac{f_6^6}{f_2^2} \\
		&\quad - 16q^3 \frac{f_6^{18}}{f_2^{14}} + 8q^3 \frac{f_6^4 f_{12}^5 f_{24}}{f_4 f_8^3} + 16q^5 \frac{f_6^{27}}{f_2^{17}} \pmod{32} \quad \text{(by \eqref{modp^k})} \\
		&= 8q \frac{f_6^6}{f_2^2} - 8q \frac{f_4 f_{12} f_{24}}{f_8} - 16q^3 \frac{f_6^{18}}{f_2^{14}} + 16q^5 \frac{f_6^{27}}{f_2^{17}}.
\end{align*}
Dividing both sides by $q$ and replacing $q^2$ by $q$ yields
\begin{align*}
		\sum_{n \ge 0} \overline{R_{4,9}}(18n+12) q^n 
		&\equiv 8 \frac{f_3^6}{f_1^2} - 8 \frac{f_2 f_{6} f_{12}}{f_4} - 16q \frac{f_3^{18}}{f_1^{14}} + 16q^2 \frac{f_3^{27}}{f_1^{17}} \pmod{32} \\
		&\equiv 8 \frac{f_3^2}{f_1^2} f_6^2 - 8 \frac{f_2 f_{6} f_{12}}{f_4} - 16q \frac{f_3^{18}}{f_1^{14}} + 16q^2 \frac{f_3}{f_1} \frac{f_6^{13}}{f_2^{8}} \pmod{32} \\
		&\quad \text{(by \eqref{modp^k})} \\
		&= 8 \left( \frac{f_4^4 f_6 f_{12}^2}{f_2^5 f_8 f_{24}} + 2q \frac{f_4 f_6^2 f_8 f_{24}}{f_2^4 f_{12}} \right) f_6^2 - 8 \frac{f_2 f_{6} f_{12}}{f_4} \\
		&\quad - 16q \frac{f_3^{18}}{f_1^{14}} + 16q^2 \left( \frac{f_4 f_6 f_{16} f_{24}^2}{f_2^2 f_8 f_{12} f_{48}} + q \frac{f_6 f_8^2 f_{48}}{f_2^2 f_{16} f_{24}} \right) \frac{f_6^{13}}{f_2^{8}} \\
		&\quad \text{(by \eqref{f32/f12}, \eqref{f3/f1})} \\
		&= 8 \frac{f_4^4 f_6^3 f_{12}^2}{f_2^5 f_8 f_{24}} + 16q \frac{f_4 f_6^4 f_8 f_{24}}{f_2^4 f_{12}} - 8 \frac{f_2 f_6 f_{12}}{f_4} - 16q \frac{f_3^{18}}{f_1^{14}} \\
		&\quad + 16q^2 \frac{f_4 f_6^{14} f_{16} f_{24}^2}{f_2^{10} f_8 f_{12} f_{48}} + 16q^3 \frac{f_6^{14} f_8^2 f_{48}}{f_2^{10} f_{16} f_{24}} \\
		&\equiv 8 \frac{f_6^7 f_8}{f_2^5 f_{24}} + 16q \frac{f_4 f_6^4 f_8 f_{24}}{f_2^4 f_{12}} - 8 \frac{f_2 f_6 f_{12}}{f_4} - 16q \frac{f_6^9}{f_2^7} \\
		&\quad + 16q^2 \frac{f_{12}^6 f_{16}}{f_8^3} + 16q^3 \frac{f_6^{14} f_8^2 f_{48}}{f_2^{10} f_{16} f_{24}} \pmod{32} \quad \text{(by \eqref{modp^k})}.
\end{align*}
Now, isolating the terms of even power and replacing $q^2$ by $q$, we obtain
\begin{align*}
		\sum_{n \ge 0} \overline{R_{4,9}}(36n+12) q^n 
		&\equiv 8 \frac{f_3^7 f_4}{f_1^5 f_{12}} - 8 \frac{f_1 f_3 f_{6}}{f_2} + 16q \frac{f_{6}^6 f_{8}}{f_4^3} \pmod{32} \\
		&\equiv 8 \frac{f_3^3}{f_1} \frac{f_4 f_6^2}{f_2^2 f_{12}} - 8 \frac{f_1 f_3 f_6}{f_2} + 16q \frac{f_6^6 f_8}{f_4^3} \pmod{32} \\
		&\quad \text{(by \eqref{modp^k})} \\
		&= 8 \left( \frac{f_4^3 f_6^2}{f_2^2 f_{12}} + q \frac{f_{12}^3}{f_4} \right) \frac{f_4 f_6^2}{f_2^2 f_{12}} \\
		&\quad - 8 \left( \frac{f_2 f_8^2 f_{12}^4}{f_4^2 f_6 f_{24}^2} - q \frac{f_4^4 f_6 f_{24}^2}{f_2 f_8^2 f_{12}^2} \right) \frac{f_6}{f_2} + 16q \frac{f_6^6 f_8}{f_4^3} \\
		&\quad \text{(by \eqref{f33/f1}, \eqref{f1f3})} \\
		&\equiv 8 f_2^4 + 8q \frac{f_6^2 f_{12}^2}{f_2^2} - 8 f_2^4 + 8q \frac{f_6^2 f_{12}^2}{f_2^2} + 16q \frac{f_6^2 f_{12}^2}{f_2^2} \pmod{32} \\
		&\quad \text{(by \eqref{modp^k})} \\
		&= 0.
\end{align*}
Similarly, when the terms of odd power are extracted, dividing by $q$ and replacing $q^2$ by $q$ leads to
\begin{align*}
		\sum_{n \ge 0} \overline{R_{4,9}}(36n+30) q^n 
		&\equiv 16 \frac{f_2 f_3^4 f_4 f_{12}}{f_1^4 f_6} + 16 \frac{f_3^9}{f_1^7} - 16q \frac{f_3^{14} f_4^2 f_{24}}{f_1^{10} f_8 f_{12}} \pmod{32} \\
		&\equiv 16 \frac{f_2 f_6^2 f_4 f_{12}}{f_2^2 f_6} + 16 \frac{f_3}{f_1} \frac{f_6^4}{f_2^3} - 16q \frac{f_6^7 f_4^2 f_{24}}{f_2^5 f_8 f_{12}} \pmod{32} \\
		&\quad \text{(by \eqref{modp^k})} \\
		&= 16 \frac{f_2 f_6^2 f_4 f_{12}}{f_2^2 f_6} + 16 \left( \frac{f_4 f_6 f_{16} f_{24}^2}{f_2^2 f_8 f_{12} f_{48}} + q \frac{f_6 f_8^2 f_{48}}{f_2^2 f_{16} f_{24}} \right) \frac{f_6^4}{f_2^3} \\
		&\quad - 16q \frac{f_6^7 f_4^2 f_{24}}{f_2^5 f_8 f_{12}} \pmod{32} \quad \text{(by \eqref{f3/f1})} \\
		&\equiv 16 f_2 f_6^3 + 16 f_2 f_6^3 + 16q \frac{f_6^9}{f_2^5} - 16q \frac{f_6^9}{f_2^5} \pmod{32} \\
		&\quad \text{(by \eqref{modp^k})} \\
		&= 0.
\end{align*}
Therefore, we have established that
\begin{equation*}
	\overline{R_{4,9}}(18n+12)\equiv 0 \pmod{32}.
\end{equation*}
For an elementary proof of $\overline{R_{4,9}}(18n+12)\equiv 0 \pmod{3}$, the reader is referred to \cite[Theorem 5.2]{Ghoshal-Jana}. This completes the proof of \eqref{4918n1296}.

\subsection*{Proof of \eqref{4918n1548}}
For an elementary proof of $\overline{R_{4,9}}(18n+15)\equiv 0 \pmod{3} $, see \cite[Theorem 5.2]{Ghoshal-Jana}. It remains to show $\overline{R_{4,9}}(18n+15)\equiv 0 \pmod{16}$. By applying \eqref{modp^k} to \eqref{493n}, we get
\begin{align*}
 \sum_{n\geq0}\overline{R_{4,9}}(3n)q^{n}&\equiv\frac{f_2^4 f_3^8}{f_1^8 f_6^4} - 8q^2 \frac{f_4 f_6 f_4 f_{12} f_{12}^6}{f_2^3 f_4^2 f_{12}^3} \pmod{16}\\
 &=\left(\frac{f_3^2}{f_1^2}\right)^4\frac{f_2^4}{f_6^4}-8q^2\frac{f_6f_{12}^4}{f_2^3}\\
 &=\left(\frac{f_4^4f_6f_{12}^2}{f_2^5f_8f_{24}}+2q\frac{f_4f_6^2f_8f_{24}}{f_2^4f_{12}}\right)^4\frac{f_2^4}{f_6^4}-8q^2\frac{f_6f_{12}^4}{f_2^3} \quad \text{(applying \eqref{f32/f12})}\\
 &\equiv\left\{\left(\frac{f_4^4f_6f_{12}^2}{f_2^5f_8f_{24}}\right)^4+4\left(\frac{f_4^4f_6f_{12}^2}{f_2^5f_8f_{24}}\right)^3\left(2q\frac{f_4f_6^2f_8f_{24}}{f_2^4f_{12}}\right)\right.\\
&\left.+6\left(\frac{f_4^4f_6f_{12}^2}{f_2^5f_8f_{24}}\right)^2\left(2q\frac{f_4f_6^2f_8f_{24}}{f_2^4f_{12}}\right)^2\right\}\frac{f_2^4}{f_6^4}-8q^2\frac{f_6f_{12}^4}{f_2^3}\pmod{16}.
\end{align*}
Extracting the terms that contain odd powers of $q$ gives 
\begin{align*}
\sum_{n\geq0}\overline{R_{4,9}}(6n+3)q^{2n+1}\equiv 4\left(\frac{f_4^4f_6f_{12}^2}{f_2^5f_8f_{24}}\right)^3\left(2q\frac{f_4f_6^2f_8f_{24}}{f_2^4f_{12}}\right)\frac{f_2^4}{f_6^4}\pmod{16}.
\end{align*}
Dividing by $q$ and then replacing $q^2$ by $q$ yields
\begin{align*}
   \sum_{n\geq0}\overline{R_{4,9}}(6n+3)q^n&\equiv8 \frac{f_2^{13}f_3f_6^5}{f_1^{15}f_4^2f_{12}^2}\pmod{16}\\
   &\equiv8\frac{f_1^{26}f_3f_6^5}{f_1^{15}f_1^8f_{12}^2} \pmod{16} \quad\text{(thanks to \eqref{modp^k})}\\
   &=8f_1^3\frac{f_3f_6^5}{f_{12}^2}\\
   &=8\frac{f_3f_6^5}{f_{12}^2}\sum_{m\geq0}(-1)^m(2m+1)q^{m(m+1)/2} \quad \text{(using \eqref{f13})}.
\end{align*}
If $m(m+1)/2=3n+2$, then $(2m+1)^2=24n+17$. But $17$ is a quadratic nonresidue modulo $24$. Thus, we have $m(m+1)/2\neq3n+2$ for any integers $m,n$. From this we conclude that the right-hand side of the last congruence does not contain terms in which the exponents of $q$ are of the form $3n+2$. Therefore, we must have $$\sum_{n\geq0}\overline{R_{4,9}}(18n+15)q^{3n+2}\equiv0\pmod{16}.$$
This completes the proof.

\subsection*{Proof of \eqref{4924n20216}}
Observe that $\overline{R_{4,9}}(24n+20)\equiv0\pmod{8}$ follows from \eqref{494knr08}. To show $\overline{R_{4,9}}(24n+20)\equiv0\pmod{27}$, we begin by extracting the terms in which the exponents of $q$ are of the form ${3n+2}$ in \eqref{49n} and obtain the following. 
\begin{align*}
    \sum_{n\geq0}\overline{R_{4,9}}(3n+2)q^{3n+2}=-4q^5\frac{f_6^3f_9^5f_{12}f_{72}^3}{f_3^7f_{18}f_{24}f_{36}^3}+4q^2\frac{f_6^2f_9^2f_{18}^2}{f_3^6}.
\end{align*}
Dividing by $q^3$ and replacing $q^3$ by $q$ gives
\begin{align*}
\sum_{n\geq0}\overline{R_{4,9}}(3n+2)q^n&=-4q\frac{f_2^3f_3^5f_{4}f_{24}^3}{f_1^7f_{6}f_{8}f_{12}^3}+4\frac{f_2^2f_3^2f_6^2}{f_1^6}\\
    &=\left\{-4q\left(\frac{f_3^3}{f_1}\right)\frac{f_2^3f_4f_{24}^3}{f_6f_8f_{12}^3}+4f_2^2f_6^2\right\}\left(\frac{f_3}{f_1^3}\right)^2\\
&=\left\{-4q\left(\frac{f_4^3f_6^2}{f_2^2f_{12}}+q\frac{f_{12}^3}{f_4} \right)\frac{f_2^3f_4f_{24}^3}{f_6f_8f_{12}^3}+4f_2^2f_6^2\right\}\\
&\quad\times\left(\frac{f_4^6f_6^3}{f_2^9f_{12}^2}+3q\frac{f_4^2f_6f_{12}^2}{f_2^7}\right)^2.
\end{align*}
Isolating the terms that contain even powers of $q$, we get
\begin{align*}
    \sum_{n\geq0}\overline{R_{4,9}}(6n+2)q^{2n}&=-4q\frac{f_4^3f_6^2}{f_2^2f_{12}}\frac{f_2^3f_4f_{24}^3}{f_6f_8f_{12}^3}\left(6q\frac{f_4^6f_6^3}{f_2^9f_{12}^2}\frac{f_4^2f_6f_{12}^2}{f_2^7}\right)\\
    &\quad -4q^2\frac{f_{12}^3}{f_4}\frac{f_2^3f_4f_{24}^3}{f_6f_8f_{12}^3}\left(\frac{f_4^{12}f_6^6}{f_2^{18}f_{12}^4}+9q^2\frac{f_4^4f_6^2f_{12}^4}{f_2^{14}}\right)\\
    &\quad + 4f_2^2f_6^2\left(\frac{f_4^{12}f_6^6}{f_2^{18}f_{12}^4}+9q^2\frac{f_4^4f_6^2f_{12}^4}{f_2^{14}}\right)\\
    &=-28q^2\frac{f_4^{12}f_6^5f_{24}^3}{f_2^{15}f_8f_{12}^4}-36q^4\frac{f_4^4f_6f_{12}^4f_{24}^3}{f_2^{11}f_8}+4\frac{f_4^{12}f_6^8}{f_2^{16}f_{12}^4}+36q^2\frac{f_4^4f_6^4f_{12}^4}{f_2^{12}}.
\end{align*}
Replacing $q^2$ by $q$ and reducing modulo 27, we get 
\begin{align*}
    \sum_{n\geq0}\overline{R_{4,9}}(6n+2)q^n&\equiv-q\frac{f_2^{12}f_3^5f_{12}^3}{f_1^{15}f_4f_{6}^4}-9q^2\frac{f_2^4f_3f_{6}^4f_{12}^3}{f_1^{11}f_4}+4\frac{f_2^{12}f_3^8}{f_1^{16}f_{6}^4}\\
    &\quad+9q\frac{f_2^4f_3^4f_{6}^4}{f_1^{12}}\pmod{27}.
\end{align*}
We apply \eqref{modp^k} to the second and fourth terms on the right-hand side to obtain
\begin{align*}
\sum_{n\geq0}\overline{R_{4,9}}(6n+2)q^n&\equiv-q\left(\frac{f_3}{f_1^3}\right)^5\frac{f_2^{12}f_{12}^3}{f_4f_6^4}-9q^2\left(\frac{1}{f_1^4}\right)^2\frac{f_2^4f_6^4f_{12}^3}{f_4}\\
&\quad +4\frac{f_3^3}{f_1}\left(\frac{f_3}{f_1^3}\right)^5\frac{f_2^{12}}{f_6^4}+9qf_2^{16}\pmod{27}\\
&=-q\left(\frac{f_4^6f_6^3}{f_2^9f_{12}^2}+3q\frac{f_4^2f_6f_{12}^2}{f_2^7}\right)^5\frac{f_2^{12}f_{12}^3}{f_4f_6^4}\\
&\quad-9q^2\left(\frac{f_4^{28}}{f_2^{28}f_8^8}+8q\frac{f_4^{16}}{f_2^{24}}+16q^2\frac{f_4^4f_8^8}{f_2^{20}}\right)\frac{f_2^4f_6^4f_{12}^3}{f_4}\\
&\quad+4\left(\frac{f_4^3f_6^2}{f_2^2f_{12}}+q\frac{f_{12}^3}{f_4}\right)\left(\frac{f_4^6f_6^3}{f_2^9f_{12}^2}+3q\frac{f_4^2f_6f_{12}^2}{f_2^7}\right)^5\frac{f_2^{12}}{f_6^4}+9qf_2^{16},
\end{align*}
where the last equality follows by using \eqref{f33/f1}, \eqref{f3/f13}, and \eqref{1/f14}. Observing $(x+3qy)^5\equiv x^5+15qx^4y+9q^2x^3y^2\pmod{27}$ and extracting the terms that contain odd powers of $q$ yields the following:
\begin{align*}
\sum_{n\geq0}\overline{R_{4,9}}(12n+8)q^{2n+1}&\equiv-q\left(\frac{f_4^{30}f_6^{15}}{f_2^{45}f_{12}^{10}}+9q^2\frac{f_4^{22}f_6^{11}}{f_2^{41}f_{12}^2}\right)\frac{f_2^{12}f_{12}^3}{f_4f_6^4}-72q^3\frac{f_4^{15}f_6^4f_{12}^3}{f_2^{20}}\\
&\quad+4q\frac{f_{12}^3}{f_4}\left(\frac{f_4^{30}f_6^{15}}{f_2^{45}f_{12}^{10}}+9q^2\frac{f_4^{22}f_6^{11}}{f_2^{41}f_{12}^2}\right)\frac{f_2^{12}}{f_6^4}\\
&\quad+4\frac{f_4^3f_6^2}{f_2^2f_{12}}\left(15q\frac{f_4^{26}f_6^{13}}{f_2^{43}f_{12}^6}\right)\frac{f_2^{12}}{f_6^4}+9qf_2^{16}\pmod{27}\\
&\equiv9q\frac{f_4^{29}f_6^{11}}{f_2^{33}f_{12}^7}+9q^3\frac{f_4^{15}f_6^4f_{12}^3}{f_2^{20}}+9qf_2^{16}\pmod{27}.
\end{align*}
We divide by $q$ and replace $q^2$ by $q$ to obtain
\begin{align*}
\sum_{n\geq0}\overline{R_{4,9}}(12n+8)q^n&\equiv 9\frac{f_2^{29}f_3^{11}}{f_1^{33}f_{6}^7}+9q\frac{f_2^{15}f_3^4f_{6}^3}{f_1^{20}}+9\left(\frac{f_1^9}{f_1}\right)^2\pmod{27}\\
&\equiv9\frac{f_2^{29}}{f_6^7}+9qf_2^{15}f_6^3\left(\frac{1}{f_1^4}\right)^2+9\left(\frac{f_9}{f_1}\right)^2\pmod{27}\\
&\quad\text{(thanks to  \eqref{modp^k} and a factor of 9 in each term)}\\
&=9\frac{f_2^{29}}{f_6^7}+9qf_2^{15}f_6^3\left(\frac{f_4^{28}}{f_2^{28}f_8^8}+8q\frac{f_4^{16}}{f_2^{24}}+16q^2\frac{f_4^4f_8^8}{f_2^{20}}\right)\\
&\quad+9\left(\frac{f_{12}^6f_{18}^2}{f_2^4f_6^2f_{36}^2}+2q\frac{f_4^2f_{12}^2f_{18}}{f_2^5}+q^2\frac{f_4^4f_6^2f_{36}^2}{f_2^6f_{12}^2}\right),
\end{align*}
where the last equality follows form \eqref{f9/f1} and \eqref{1/f14}. Finally, we extract the terms that contain odd powers of $q$, divide by $q$, and replace $q^2$ by $q$ to obtain 
\begin{align*}
\sum_{n\geq0}\overline{R_{4,9}}(24n+20)q^n&\equiv 9\frac{f_2^{28}f_3^3}{f_1^{13}f_4^8}+144q\frac{f_2^4f_3^3f_4^8}{f_1^5}+18\frac{f_2^2f_{6}^2f_{9}}{f_1^5} \pmod{27}\\
&\equiv 9\frac{f_2^{28}}{f_1^4f_4^8}+9qf_2^4f_1^4f_4^8-9f_2^8f_1^4 \pmod{27} \quad \text{(applying \eqref{modp^k})}.
\end{align*}

To complete the proof, it suffices to show that 
$$\frac{f_2^{28}}{f_1^4f_4^8}+qf_2^4f_1^4f_4^8-f_2^8f_1^4\equiv0 \pmod{3}.$$
Equivalently, we show $F(q):=f_2^{28}+qf_1^8f_2^4f_4^{16}-f_1^8f_2^8f_4^8\equiv0\pmod{3}$. For, we rewrite $F(q)$ as
\begin{align*}
    F(q)&=f_2^{28}+(qf_2^4f_4^{16}-f_2^8f_4^8)\frac{f_1^9}{f_1}\\
    &\equiv f_2^{28}+(qf_2^4f_4^{16}-f_2^8f_4^8)\frac{f_3^3}{f_1}\pmod{3}\quad\text{(thanks to \eqref{modp^k})}\\
    &=f_2^{28}+(qf_2^4f_4^{16}-f_2^8f_4^8)\left(\frac{f_4^3f_6^2}{f_2^2f_{12}}+q\frac{f_{12}^3}{f_4}\right) \quad \text{(using \eqref{f33/f1})}\\
    &=f_2^{28}+q\frac{f_2^2f_4^{19}f_6^2}{f_{12}}+q^2f_2^4f_4^{15}f_{12}^3-\frac{f_2^6f_4^{11}f_6^2}{f_{12}}-qf_2^8f_4^7f_{12}^3\\
    &\equiv f_2^{28}+qf_2^8f_4^{16}+q^2f_2^4f_4^{24}-f_2^{12}f_4^8-qf_2^8f_4^{16}\pmod{3} \quad\text{(thanks to \eqref{modp^k})}\\
    &=f_2^4(f_2^{24}+q^2f_4^{24}-f_2^{8}f_4^8)\\
    &=-f_2^4G(q^2),
\end{align*}
where $G(q)=f_1^8f_2^8-qf_2^{24}-f_1^{24}$ as in the proof Theorem \ref{T494n012}. As shown in the proof of the theorem, we have $G(q)\equiv0\pmod{3}$, which implies $G(q^2)\equiv0\pmod{3}$. This completes the proof.

\bibliographystyle{plain}
\nocite{*}
\bibliography{Double_Regular.bib}

\begin{thebibliography}{10}

\bibitem{Adiga-Dasappa}
Chandrashekar Adigaa and Ranganatha Dasappab.
\newblock On overpartition pairs into odd parts modulo powers of 2.
\newblock {\em Discrete Mathematics}, 341:3141–3147, 2018.

\bibitem{Alanazi-Munagi-Saikia}
Abdulaziz~M. Alanazi, Augustine~O. Munagi, and Manjil~P. Saikia.
\newblock Some properties of overpartitions into nonmultiples of two integers.
\newblock {\em https://arxiv.org/abs/2412.18938}, 2024.

\bibitem{Alanazi-Munagi-Sellers}
Abdulaziz~M. Alanazi, Augustine~O. Munagi, and James~A. Sellers.
\newblock An infinite family of congruences for {$\ell$}-regular
  overpartitions.
\newblock {\em Integers}, 16:Paper No. A37, 8, 2016.

\bibitem{Barman-Ray}
Rupam Barman and Chiranjit Ray.
\newblock Congruences for {$\ell $}-regular overpartitions and {A}ndrews'
  singular overpartitions.
\newblock {\em Ramanujan J.}, 45(2):497--515, 2018.

\bibitem{Brietzke-daSilva-Sellers}
Eduardo H.~M. Brietzke, Robson da~Silva, and James~A. Sellers.
\newblock Congruences related to an eighth order mock theta function of
  {G}ordon and {M}c{I}ntosh.
\newblock {\em J. Math. Anal. Appl.}, 479(1):62--89, 2019.

\bibitem{Chen-Sun-Wang-Zhang}
William Y.~C. Chen, Lisa~H. Sun, Rong-Hua Wang, and Li~Zhang.
\newblock Ramanujan-type congruences for overpartitions modulo 5.
\newblock {\em J. Number Theory}, 148:62--72, 2015.

\bibitem{Corteel-Lovejoy}
Sylvie Corteel and Jeremy Lovejoy.
\newblock Overpartitions.
\newblock {\em Trans. Amer. Math. Soc.}, 356(4):1623--1635, 2004.

\bibitem{daSilva-Sellers}
Robson da~Silva and James~A. Sellers.
\newblock Infinitely many congruences for {$k$}-regular partitions with
  designated summands.
\newblock {\em Bull. Braz. Math. Soc. (N.S.)}, 51(2):357--370, 2020.

\bibitem{Das-Maity-Saikia}
Hirakjyoti Das, Saikat Maity, and Manjil~P. Saikia.
\newblock Arithmetic properties of generalized cubic and overcubic partitions.
\newblock {\em https://arxiv.org/abs/2503.19399}, 2025.

\bibitem{Fortin-Jacob-Mathieu}
J.-F. Fortin, P.~Jacob, and P.~Mathieu.
\newblock Jagged partitions.
\newblock {\em Ramanujan J.}, 10(2):215--235, 2005.

\bibitem{Ghoshal-Jana}
Suparno Ghoshal and Arijit Jana.
\newblock Further results on arithmetic properties of biregular overpartitions.
\newblock {\em https://arxiv.org/abs/2504.21439}, 2025.

\bibitem{Hirschhorn}
Michael~D. Hirschhorn.
\newblock {\em The power of {$q$}}, volume~49 of {\em Developments in
  Mathematics}.
\newblock Springer, Cham, 2017.
\newblock A personal journey, With a foreword by George E. Andrews.

\bibitem{Hirschhorn-Sellers_2005}
Michael~D. Hirschhorn and James~A. Sellers.
\newblock Arithmetic relations for overpartitions.
\newblock {\em J. Combin. Math. Combin. Comput.}, 53:65--73, 2005.

\bibitem{Hirschhorn-Sellers_2014}
Michael~D. Hirschhorn and James~A. Sellers.
\newblock A congruence modulo 3 for partitions into distinct non-multiples of
  four.
\newblock {\em J. Integer Seq.}, 17(9):Article 14.9.6, 7, 2014.

\bibitem{Lin}
Bernard L.~S. Lin.
\newblock Arithmetic properties of overpartition pairs into odd parts.
\newblock {\em Electron. J. Combin.}, 19(2):Paper 17, 12, 2012.

\bibitem{Lovejoy}
Jeremy Lovejoy.
\newblock Gordon's theorem for overpartitions.
\newblock {\em J. Combin. Theory Ser. A}, 103(2):393--401, 2003.

\bibitem{MacMahon}
Percy~A. MacMahon.
\newblock {\em Combinatory analysis}.
\newblock Chelsea Publishing Co., New York, 1960.
\newblock Two volumes (bound as one).

\bibitem{Mahlburg}
Karl Mahlburg.
\newblock The overpartition function modulo small powers of 2.
\newblock {\em Discrete Math.}, 286(3):263--267, 2004.

\bibitem{Nadji-Ahmia-Ramirez}
Mohammed~L. Nadji, Moussa Ahmia, and Jos\'{e}~L. Ram\'{\i}rez.
\newblock Arithmetic properties of biregular overpartitions.
\newblock {\em Ramanujan J.}, 67(1):Paper No. 13, 21, 2025.

\bibitem{Ray-Barman}
Chiranjit Ray and Rupam Barman.
\newblock Infinite families of congruences for {$k$}-regular overpartitions.
\newblock {\em Int. J. Number Theory}, 14(1):19--29, 2018.

\bibitem{Sellers}
James~A. Sellers.
\newblock Elementary proofs of congruences for {POND} and {PEND} partitions.
\newblock {\em J. Integer Seq.}, 27(4):Art. 24.4.7, 16, 2024.

\bibitem{Shen}
Erin Y.~Y. Shen.
\newblock Arithmetic properties of {$l$}-regular overpartitions.
\newblock {\em Int. J. Number Theory}, 12(3):841--852, 2016.

\bibitem{Treneer}
Stephanie Treneer.
\newblock Congruences for the coefficients of weakly holomorphic modular forms.
\newblock {\em Proc. London Math. Soc. (3)}, 93(2):304--324, 2006.

\bibitem{Xia}
Ernest X.~W. Xia.
\newblock Congruences modulo 9 and 27 for overpartitions.
\newblock {\em Ramanujan J.}, 42(2):301--323, 2017.

\bibitem{Xia-Yao_2013}
Ernest X.~W. Xia and Olivia X.~M. Yao.
\newblock Analogues of {R}amanujan's partition identities.
\newblock {\em Ramanujan J.}, 31(3):373--396, 2013.

\bibitem{Xia-Yao}
Ernest X.~W. Xia and X.~M. Yao.
\newblock Some modular relations for the {G}\"{o}llnitz-{G}ordon functions by
  an even-odd method.
\newblock {\em J. Math. Anal. Appl.}, 387(1):126--138, 2012.

\end{thebibliography}

\end{document}